\newtheorem{theorem}{Theorem}
\newtheorem{corollary}[theorem]{Corollary}
\newtheorem{lemma}[theorem]{Lemma}
\newtheorem{remark}[theorem]{Remark}
\newenvironment{proof}[1][Proof]{\noindent\textbf{#1.} }{\ \rule{0.5em}{0.5em}}
\begin{document}

\title {\bf Existence of mild solutions for a system of partial
differential equations with time-de\-pe\-ndent generators}

\author{{\bf Amanda del Carmen Andrade-Gonz\'{a}lez}\\
Universidad Aut\'{o}noma de Aguascalientes\\
Departamento de Matem\'{a}ticas y F\'{\i}sica\\
Aguascalientes, Aguascalientes, Mexico.\\
{\it acandra@correo.uaa.mx}\\ 
{\bf Jos\'{e} Villa-Morales}\\
Universidad Aut\'{o}noma de Aguascalientes\\
Departamento de Matem\'{a}ticas y F\'{\i}sica\\
Aguascalientes, Aguascalientes, Mexico.\\
{\it jvilla@correo.uaa.mx}}

\date{}

\maketitle

\begin{abstract}
We give sufficient conditions for global existence of positive mild
solutions for the weak coupled system:%
\begin{eqnarray*}
\frac{\partial u_{1}}{\partial t} &=&\rho _{1}t^{\rho _{1}-1}\Delta _{\alpha
_{1}}u_{1}+t^{\sigma _{1}}u_{2}^{\beta _{1}},\text{\ \ }u_{1}\left( 0\right)
=\varphi _{1}, \\
\frac{\partial u_{2}}{\partial t} &=&\rho _{2}t^{\rho _{2}-1}\Delta _{\alpha
_{2}}u_{2}+t^{\sigma _{2}}u_{1}^{\beta _{2}},\text{\ \ }u_{2}\left( 0\right)
=\varphi _{2},
\end{eqnarray*}%
where $\Delta _{\alpha _{i}}$ is a fractional Laplacian, $0<\alpha _{i}\leq
2,\ \beta _{i}>1,\ \rho _{i}>0,\ \sigma _{i}>-1\ $are constants and the
initial data $\varphi _{i}$ are positive, bounded and integrable functions.

\vspace{0.5cm}
{\bf Mathematics Subject Classification (2010).} Primary 35K55, 35K45; Secondary 35B40, 35K20.

{\bf Keywords.} weakly coupled system, existence of mild solutions, non autonomous initial value problem.
\end{abstract}

\section{Introduction: statement of the results and ov\-er\-view}

Let $i\in \{1,2\}$ and $j=3-i$. In this paper we study the existence of
positive mild solutions of%
\begin{eqnarray}
\frac{\partial u_{i}\left( t,x\right) }{\partial t} &=&\rho _{i}t^{\rho
_{i}-1}\Delta _{\alpha _{i}}u_{i}\left( t,x\right) +t^{\sigma
_{i}}u_{j}^{\beta _{i}}\left( t,x\right) ,\text{\ \ }t>0,\text{ }x\in 
\mathbb{R}^{d},  \label{wcs} \\
u_{i}\left( 0,x\right) &=&\varphi _{i}\left( x\right) ,\text{\ \ }x\in 
\mathbb{R}^{d}.  \notag
\end{eqnarray}%
where $\Delta _{\alpha _{i}}:=-\left( -\Delta \right) ^{\alpha _{i}/2}$, $%
0<\alpha _{i}\leq 2$, is the $\alpha _{i}$-Laplacian, $\beta _{i}>1,$ $\rho
_{i}>0$, $\sigma _{i}>-1$ are constants and $\varphi _{i}$ are non negative,
not identically zero, bounded integrable functions.

The associated integral system of (\ref{wcs}) is%
\begin{equation}
u_{i}(t,x)=\int_{\mathbb{R}^{d}}p_{i}(t^{\rho _{i}},y-x)\varphi
_{i}(y)dy+\int_{0}^{t}\int_{\mathbb{R}^{d}}p_{i}(t^{\rho _{i}}-s^{\rho
_{i}},y-x)s^{\sigma _{i}}u_{j}^{\beta _{i}}(s,y)dyds.  \label{ecintegrl}
\end{equation}%
Here $p_{i}\left( t,x\right) $ denote the fundamental solution of $\frac{%
\partial }{\partial t}-\Delta _{\alpha _{i}}$ (in probability theory it is
called the symmetric $\alpha _{i}$-stable density). We say that $\left(
u_{1},u_{2}\right) $ is a mild solution of (\ref{wcs}) if $\left(
u_{1},u_{2}\right) $ is a solution of (\ref{ecintegrl}).

If there exist a solution $\left( u_{1},u_{2}\right) $ of (\ref{wcs})
defined in $\left[ 0,\infty \right) \times \mathbb{R}^{d}$, we say that $%
\left( u_{1},u_{2}\right) $ is a (classical) global solution, on the other
hand if there exists a number $t_{e}<\infty $ such that $\left(
u_{1},u_{2}\right) $ is unbounded in $\left[ 0,t\right] \times \mathbb{R}%
^{d} $, for each $t>t_{e}$, then we say that $\left( u_{1},u_{2}\right) $
blows up in finite time. It is well known that a classical solution is a
mild solution, but not vice versa. Therefore, if we give a sufficient
condition for global existence of positive solutions to (\ref{ecintegrl})
then we do not necessary have a condition for global existence of classical
solutions to (\ref{wcs}). Here we are going to deal with (mild) global
solutions.

Set $a\in \{1,2\}$ for which 
\begin{equation}
\alpha _{a}=\min \{\alpha _{1},\alpha _{2}\}\text{ \ and \ }b=3-a.
\label{defa}
\end{equation}

We also note that $i$ and $j$ are dummy variables, then if we define an
expression for $i$ we obtain other similar expression for $j$, changing only
the roles of the indices. For example, in the below inequality (\ref%
{condexiint}) is required $\tilde{x}_{j}$ and it is obtained from the
definition of $\tilde{x}_{i}$ given in (\ref{defxyro}). We are going to
follow this convention.

The main result is:

\begin{theorem}
\label{TeoPr} Addition to the above conditions on $\alpha _{i}$, $\beta _{i}$%
, $\rho _{i}$, $\sigma _{i}$ suppose that 
\begin{equation}
\max \left\{ \tilde{x_{i}},\tilde{x_{j}}\right\} <\min \left\{ 1,\tilde{\rho}%
_{i},\tilde{\rho}_{j},\max \{\tilde{k}_{i},\tilde{k}_{j}\}\right\} ,
\label{condexiint}
\end{equation}%
where 
\begin{equation}
\tilde{x}_{i}=\frac{1+\beta _{i}+\sigma _{i}(1-\beta _{i}\beta _{j})}{\beta
_{i}(1+\beta _{j})},\ \ \tilde{\rho}_{i}=\rho _{i}-\sigma _{i},
\label{defxyro}
\end{equation}%
and%
\begin{equation}
\tilde{k}_{i}=\frac{d\rho _{i}\rho _{j}(\beta _{i}\beta _{j}-1)-(\alpha
_{j}\rho _{i}\sigma _{j}+\alpha _{i}\beta _{j}\rho _{j}\sigma _{i})\beta _{i}%
}{\beta _{i}(\alpha _{j}\rho _{i}+\alpha _{i}\beta _{j}\rho _{j})}.
\label{defktilde}
\end{equation}%
If 
\begin{equation*}
\max \left\{ \tilde{x_{i}},\tilde{x_{j}}\right\} <\Delta <\min \left\{ 1,%
\tilde{\rho}_{i},\tilde{\rho}_{j},\max \{\tilde{k}_{i},\tilde{k}%
_{j}\}\right\}
\end{equation*}%
and $\varphi _{i}\in L_{+}^{\infty }(\mathbb{R}^{d})\cap L_{+}^{r_{i}}(%
\mathbb{R}^{d})$, where%
\begin{equation}
r_{i}=\frac{d\rho _{i}\rho _{j}(\beta _{i}\beta _{j}-1)}{\alpha _{i}\rho
_{j}(1+\beta _{i})+\alpha _{i}\rho _{j}\sigma _{i}+\beta _{i}\alpha _{j}\rho
_{i}\sigma _{j}+\beta _{i}(\alpha _{j}\rho _{i}-\alpha _{i}\rho _{j})\Delta }%
,  \label{defr2}
\end{equation}%
then (\ref{ecintegrl}) has a unique global solution $(u_{1},u_{2})$.
Moreover, there exists an $\varepsilon >0$ such that if $\left\Vert \varphi
_{i}\right\Vert _{r_{i}}+\left\Vert \varphi _{j}\right\Vert _{r_{j}}^{\beta
_{i}}<\varepsilon ,$ then%
\begin{equation}
t^{\xi _{i}}\left\Vert u_{i}(t)\right\Vert _{s_{i}}\leq c\varepsilon ,\ \
\forall t>0,  \label{estdslenls}
\end{equation}%
where $c$ is a positive constant%
\begin{equation}
s_{i}=\frac{d\rho _{i}\rho _{j}(\beta _{i}\beta _{j}-1)}{\alpha _{i}\rho
_{j}\sigma _{i}+\beta _{i}\alpha _{j}\rho _{i}\sigma _{j}+(\alpha _{i}\rho
_{j}+\beta _{i}\alpha _{j}\rho _{i})\Delta },  \label{defs2}
\end{equation}%
and 
\begin{equation*}
\xi _{i}=\frac{\alpha _{i}\rho _{j}-\Delta \alpha _{i}\rho _{j}+\alpha
_{i}\beta _{i}\rho _{j}-\Delta \alpha _{i}\beta _{i}\rho _{j}}{\alpha
_{i}\rho _{j}(\beta _{i}\beta _{j}-1)}.
\end{equation*}
\end{theorem}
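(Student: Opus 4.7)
The plan is a classical small-data contraction argument in a weighted space adapted to the parabolic scaling generated by the time-inhomogeneous operators $\rho_i t^{\rho_i-1}\Delta_{\alpha_i}$. Introduce the Banach space
\[
X=\bigl\{(u_1,u_2): u_i:(0,\infty)\to L^{s_i}(\mathbb{R}^d)\text{ measurable},\;\|(u_1,u_2)\|_X<\infty\bigr\},
\]
with $\|(u_1,u_2)\|_X=\sum_i\sup_{t>0}t^{\xi_i}\|u_i(t)\|_{s_i}$, and define $\Phi=(\Phi_1,\Phi_2)$ via the right-hand side of (\ref{ecintegrl}). A global mild solution will be produced as the unique fixed point of $\Phi$ in a small closed ball $B_R\subset X$.

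The workhorse is the $L^p$--$L^q$ estimate for the symmetric $\alpha_i$-stable semigroup, $\|p_i(\tau,\cdot)\ast f\|_q\leq C\tau^{-\frac{d}{\alpha_i}(\frac1p-\frac1q)}\|f\|_p$, applied with $\tau=t^{\rho_i}$ for the linear part and $\tau=t^{\rho_i}-s^{\rho_i}$ for the nonlinear part. First I would show that $\varphi_i\in L^{r_i}\cap L^\infty$ gives $t^{\xi_i}\|\int p_i(t^{\rho_i},y-\cdot)\varphi_i(y)\,dy\|_{s_i}\leq c\|\varphi_i\|_{r_i}$; the identity $\xi_i=\frac{d\rho_i}{\alpha_i}(\frac{1}{r_i}-\frac{1}{s_i})$, which is the very reason for the definitions (\ref{defr2}) and (\ref{defs2}), is what makes this work. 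Next, for the nonlinear part I would use H\"older (with $\|u_j^{\beta_i}\|_{p_i}=\|u_j\|_{\beta_i p_i}^{\beta_i}$ for $\beta_i p_i=s_j$) and then the semigroup estimate to obtain, for $(u_1,u_2)\in B_R$,
\[
t^{\xi_i}\Bigl\|\int_0^t\!\!\int p_i(t^{\rho_i}-s^{\rho_i},y-\cdot)s^{\sigma_i}u_j^{\beta_i}(s,y)\,dy\,ds\Bigr\|_{s_i}\leq C\,R^{\beta_i}\,t^{\xi_i}\!\!\int_0^t(t^{\rho_i}-s^{\rho_i})^{-\kappa_i}s^{\sigma_i-\beta_i\xi_j}\,ds,
\]
with $\kappa_i=\frac{d}{\alpha_i}(\frac{\beta_i}{s_j}-\frac{1}{s_i})$. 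After the change of variable $s=t\tau$ this reduces to a beta-type integral times $t^0$, provided the exponents match; convergence at $\tau=0$ requires $\sigma_i-\beta_i\xi_j>-1$, which unpacks to $\Delta>\tilde{x}_i$, convergence at $\tau=1$ requires $\rho_i\kappa_i<1$, which unpacks to $\Delta<\tilde{\rho}_i$ together with $\Delta<\max\{\tilde{k}_i,\tilde{k}_j\}$, and the exponent of $t$ vanishing forces the defining relation (\ref{defs2}) for $s_i$. The upper bound $\Delta<1$ comes from requiring $r_i>1$.

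Together these estimates give $\|\Phi(u_1,u_2)\|_X\leq C_0(\|\varphi_1\|_{r_1}+\|\varphi_2\|_{r_2})+C_1(R^{\beta_1}+R^{\beta_2})$; choosing $R$ and then $\varepsilon$ small, $\Phi$ maps $B_R$ into itself. The contraction estimate is identical in structure: write $u_j^{\beta_i}-v_j^{\beta_i}=\beta_i\int_0^1(\theta u_j+(1-\theta)v_j)^{\beta_i-1}(u_j-v_j)\,d\theta$ and apply H\"older as before, which costs a factor of $R^{\beta_i-1}$ and yields contractivity for $R$ small. Uniqueness in $X$ and the decay bound (\ref{estdslenls}) follow immediately from the fixed-point statement. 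The main obstacle is bookkeeping: verifying that the single parameter $\Delta$ satisfying (\ref{condexiint}) makes the three requirements (convergence at $0$, convergence at $t$, vanishing of the net power of $t$) simultaneously compatible with the definitions (\ref{defr2})--(\ref{defs2}). This is essentially a linear-algebra check on the exponents, and the role of $\tilde{k}_i$ is precisely to encode the most delicate of these constraints (the one at the upper endpoint of the time integral), with the maximum in (\ref{condexiint}) allowing one to pass the bound through whichever index happens to be binding.
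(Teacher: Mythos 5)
Your overall strategy---a single global-in-time contraction in the scaling-weighted space with norm $\sum_i\sup_{t>0}t^{\xi_i}\Vert u_i(t)\Vert_{s_i}$---is a reasonable alternative to what the paper does (local existence via a fixed point for a once-iterated equation together with the comparison Lemma \ref{lemacompa}, followed by a global a priori bound obtained from the scalar inequality $z_i\leq c\varepsilon+cz_i^{\beta_i\beta_j}$ in (\ref{eq:ecfl}) and a continuity argument). Your exponent bookkeeping is essentially the paper's: $\xi_i=\frac{d\rho_i}{\alpha_i}(\frac1{r_i}-\frac1{s_i})$, the beta integrals, $\Delta>\tilde{x}_i$ from integrability at $s=0$, and $\delta_i<1$ (equivalently $\Delta<\tilde\rho_i$; note $1-\tau^{\rho_i}\sim\rho_i(1-\tau)$, so the endpoint condition is $\kappa_i<1$, not $\rho_i\kappa_i<1$) all match (\ref{eq:defjide})--(\ref{eq:ecua6}).

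There is, however, one concrete gap. The constraint involving $\tilde{k}_i$ is not a convergence condition at the upper endpoint of the time integral, as you assert; a direct computation with (\ref{defs2}) and (\ref{defktilde}) shows that $\Delta<\tilde{k}_i$ is exactly equivalent to $s_j>\beta_i$, which is the condition that legitimises your H\"older/Young step $\Vert u_j^{\beta_i}\Vert_{s_j/\beta_i}=\Vert u_j\Vert_{s_j}^{\beta_i}$ (the exponent $s_j/\beta_i$ must be at least $1$). Your symmetric contraction must perform this step for \emph{both} components, so it needs $s_j\geq\beta_i$ \emph{and} $s_i\geq\beta_j$, i.e.\ $\Delta<\min\{\tilde{k}_i,\tilde{k}_j\}$. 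The theorem only assumes $\Delta<\max\{\tilde{k}_i,\tilde{k}_j\}$, and when $\tilde{k}_i<\Delta<\tilde{k}_j$ one of your two H\"older applications is inadmissible, so the map $\Phi$ is not even well defined on $X$. The missing idea is the paper's asymmetric device: substitute the Duhamel formula for $u_i$ into the equation for $u_j$ once and collapse the kernels via the semigroup property (\ref{smgprop}), which closes an inequality on a single component (as in Lemma \ref{aeg} and (\ref{spatercpa})) and therefore needs the exponent condition $s_j\geq\beta_i$ for only one ordering of the indices---this is precisely what buys the $\max$ in (\ref{condexiint}). Your argument does go through if you strengthen the hypothesis to $\Delta<\min\{\tilde{k}_i,\tilde{k}_j\}$, but then it proves a weaker theorem. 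Two minor further points: the ball $B_R$ should be intersected with the positive cone to produce a \emph{positive} solution, and the smallness of $\Vert\varphi_i\Vert_{r_i}$ that your self-map estimate requires should be recorded as a hypothesis for global existence, exactly as in the paper's condition (\ref{eq:csmall}).
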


Since we are dealing with an integral equation we just need to have the
solutions of (\ref{ecintegrl}) defined almost surely, this is what (\ref%
{estdslenls}) tell us. But imposing more restrictions we have that the
solutions of (\ref{ecintegrl}) are essentially bounded:

\begin{corollary}
\label{SegRe} Assume the hypothesis of Theorem \ref{TeoPr} and that 
\begin{equation}
\max \left\{ \tilde{x_{i}},\tilde{x_{j}}\right\} <\min \left\{ 1,\tilde{\rho}%
_{j},\tilde{\rho}_{i},\max \{\min \{\tilde{k}_{i},\hat{k}_{i}\},\min \{%
\tilde{k}_{j},\hat{k}_{j}\}\}\right\},  \label{cess}
\end{equation}%
where%
\begin{equation*}
\hat{k}_{i}=\frac{\alpha _{i}\rho _{i}\rho _{j}(\beta _{i}\beta
_{j}-1)-(\alpha _{j}\rho _{i}\sigma _{j}+\alpha _{i}\beta _{j}\rho
_{j}\sigma _{i})\beta _{i}}{\beta _{i}(\alpha _{j}\rho _{i}+\alpha _{i}\beta
_{j}\rho _{j})},
\end{equation*}%
then 
\begin{equation*}
||u_{i}(t)||_{\infty }\leq c||\varphi _{i}||_{\infty }+ct^{\sigma _{i}-\beta
_{i}\xi _{j}-\rho _{i}d\beta _{i}/(\alpha _{i}s_{j})+1},\ \ \forall t>0.
\end{equation*}
\end{corollary}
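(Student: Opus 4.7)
The plan is to estimate the two terms in the mild formulation (\ref{ecintegrl}) separately in $L^{\infty}$. For the free term, since $p_{i}(t^{\rho_{i}},\cdot)$ is a probability density, Young's inequality yields
\[
\Bigl\|\int_{\mathbb{R}^{d}}p_{i}(t^{\rho_{i}},y-\cdot)\varphi_{i}(y)\,dy\Bigr\|_{\infty}\leq \|\varphi_{i}\|_{\infty},
\]
which accounts for the first term in the bound. The genuine work is on the Duhamel term, and the strategy is to combine the $L^{s_{j}}$-decay estimate $\|u_{j}(s)\|_{s_{j}}\leq c\varepsilon s^{-\xi_{j}}$ provided by Theorem \ref{TeoPr} with the standard $L^{p}$--$L^{\infty}$ smoothing of the $\alpha_{i}$-stable semigroup, $\|p_{i}(\tau,\cdot)\ast g\|_{\infty}\leq c\tau^{-d/(\alpha_{i}p)}\|g\|_{p}$, applied with $p=s_{j}/\beta_{i}$ and $g(\cdot)=s^{\sigma_{i}}u_{j}^{\beta_{i}}(s,\cdot)$, using $\|u_{j}^{\beta_{i}}(s)\|_{s_{j}/\beta_{i}}=\|u_{j}(s)\|_{s_{j}}^{\beta_{i}}$.

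This furnishes the pointwise estimate
\[
\Bigl|\int_{0}^{t}\!\int_{\mathbb{R}^{d}}p_{i}(t^{\rho_{i}}-s^{\rho_{i}},y-x)s^{\sigma_{i}}u_{j}^{\beta_{i}}(s,y)\,dy\,ds\Bigr|\leq c(c\varepsilon)^{\beta_{i}}\int_{0}^{t}(t^{\rho_{i}}-s^{\rho_{i}})^{-d\beta_{i}/(\alpha_{i}s_{j})}s^{\sigma_{i}-\beta_{i}\xi_{j}}\,ds.
\]
The substitution $s=tu$ factors out the $t$-dependence and produces the prefactor $t^{\sigma_{i}-\beta_{i}\xi_{j}-\rho_{i}d\beta_{i}/(\alpha_{i}s_{j})+1}$, which is exactly the exponent appearing in the conclusion. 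What remains is to show that the resulting beta-type integral $\int_{0}^{1}(1-u^{\rho_{i}})^{-d\beta_{i}/(\alpha_{i}s_{j})}u^{\sigma_{i}-\beta_{i}\xi_{j}}\,du$ is finite. Integrability at $u=1$ demands $d\beta_{i}/(\alpha_{i}s_{j})<1$, and integrability at $u=0$ demands $\sigma_{i}-\beta_{i}\xi_{j}+1>0$.

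The principal obstacle is thus algebraic: one must verify that, after substituting the explicit formulas (\ref{defs2}) for $s_{j}$ and the definition of $\xi_{j}$, the condition $d\beta_{i}/(\alpha_{i}s_{j})<1$ rearranges precisely to $\Delta<\hat{k}_{i}$—this is the new ingredient in (\ref{cess}) and explains why $\hat{k}_{i}$ differs from $\tilde{k}_{i}$ only by the replacement of $d$ by $\alpha_{i}$ in the leading term—while $\sigma_{i}-\beta_{i}\xi_{j}+1>0$ rearranges to $\Delta>\tilde{x}_{i}$, a condition already built into the hypotheses of Theorem \ref{TeoPr}. The $\max\{\min\{\tilde{k}_{i},\hat{k}_{i}\},\min\{\tilde{k}_{j},\hat{k}_{j}\}\}$ appearing in (\ref{cess}) then selects the index $i$ for which the above integrability analysis applies, yielding the desired bound for the corresponding $u_{i}$. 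Once these equivalences are checked, the remainder of the proof is a routine bookkeeping step of combining the two estimates.
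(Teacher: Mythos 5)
Your proposal is correct and follows essentially the same route as the paper: bound the free term by $\|\varphi_i\|_\infty$, apply H\"older/Young smoothing with exponent $s_j/\beta_i$ to the Duhamel term, insert the decay $\|u_j(s)\|_{s_j}\leq c\varepsilon s^{-\xi_j}$ from (\ref{estdslenls}), rescale to a beta-type integral, and read off $d\beta_i/(\alpha_i s_j)<1\iff\Delta<\hat{k}_i$ and $\sigma_i-\beta_i\xi_j+1>0\iff\Delta>\tilde{x}_i$, with the $\max$-$\min$ in (\ref{cess}) coming from symmetrizing in $i$ and $j$. The only cosmetic difference is that the paper works through the comparison kernel $p_a$ and states the $L^\mu$ norm of the kernel as Lemma \ref{estmude}, whereas you invoke the equivalent $L^p$--$L^\infty$ smoothing directly.
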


\begin{remark}
From the expression (\ref{expauxpkp}) of $\hat{k}_{i}$ we observe that $%
\alpha _{i}\geq d$ implies $\tilde{k}_{i}\leq \hat{k}_{i}$. Then, the bound (%
\ref{estdslenls}) for the solutions of (\ref{ecintegrl}) imply they are
essentially bounded. In others words, for small dimensions the solutions are
essentially bounded and integrable.
\end{remark}

Now, if the time-dependent generators are the same and we take a specific
initial data, then we get bounds for the solutions of (\ref{ecintegrl}). In
particular, this means that for these choice of parameters there are global
non trivial solutions for (\ref{ecintegrl}) with suitable initial conditions.

\begin{theorem}
\label{exiconicon} If 
\begin{equation*}
\alpha _{i}=\alpha _{j}=\alpha ,\ \ \rho _{i}=\rho _{j}=\rho \leq 1,
\end{equation*}%
and 
\begin{equation*}
\frac{1+\max \{\sigma _{i}+\beta _{i}(1+\sigma _{j}),\sigma _{j}+\beta
_{j}(1+\sigma _{i})\}}{\beta _{i}\beta _{j}-1}<\frac{d\rho }{\alpha },
\end{equation*}%
then there exists an $\varepsilon >0$ such that if 
\begin{equation*}
\varphi _{i}(x)=\varepsilon p(1,x),\ \ x\in \mathbb{R}^{d},
\end{equation*}%
then 
\begin{equation*}
u_{i}\left( t,x\right) \leq c\varepsilon (1+t)^{-k}(1+t^{\rho })^{d/\alpha
}p(1+t^{\rho },x),\ \ \forall (t,x)\in \lbrack 0,\infty )\times \mathbb{R}%
^{d},
\end{equation*}%
where $c$ and $k$ are positive constants.
\end{theorem}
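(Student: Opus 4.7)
The plan is to construct $(u_1,u_2)$ by a monotone Picard iteration and prove by induction that every iterate is dominated by a specific multiple of a stable-density profile. With $u_i^{(0)}\equiv 0$, the choice $\varphi_i(x)=\varepsilon p(1,x)$ together with the semigroup identity $\int p(t^\rho,y-x)p(1,y)\,dy=p(1+t^\rho,x)$ immediately gives $u_i^{(1)}(t,x)=\varepsilon p(1+t^\rho,x)$, which motivates the ansatz
\[U_i(t,x):=M\varepsilon(1+t)^{-k_i}(1+t^\rho)^{d/\alpha}\,p(1+t^\rho,x).\]
The exponents $k_i,k_j$ should be the unique solution of the linear system $\beta_i k_j-k_i=1+\sigma_i$, $\beta_j k_i-k_j=1+\sigma_j$, obtained by demanding that after integration of the nonlinear source the same profile reappears at the level of leading powers of $t$; explicitly, $k_i=(1+\sigma_i+\beta_i(1+\sigma_j))/(\beta_i\beta_j-1)$ and symmetrically for $k_j$.

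To close the inductive step $u_j^{(n)}\le U_j\Rightarrow u_i^{(n+1)}\le U_i$ I would use two standard facts about the symmetric $\alpha$-stable density. The first is the elementary pointwise bound
\[p(\tau,y)^{\beta}\le p(1,0)^{\beta-1}\,\tau^{-d(\beta-1)/\alpha}\,p(\tau,y),\]
derived from the scaling $p(\tau,y)=\tau^{-d/\alpha}p(1,\tau^{-1/\alpha}y)$ together with $p(1,\cdot)\le p(1,0)$. Applied with $\tau=1+s^\rho$ and $\beta=\beta_i$, it turns $U_j^{\beta_i}(s,y)$ into something linear in $p(1+s^\rho,y)$ up to time-dependent factors. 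The second is Chapman--Kolmogorov, $\int p(t^\rho-s^\rho,y-x)\,p(1+s^\rho,y)\,dy=p(1+t^\rho,x)$, which removes the spatial convolution entirely. After these two reductions the inductive step reduces to the one-dimensional estimate
\[\int_0^t s^{\sigma_i}(1+s)^{-k_j\beta_i}(1+s^\rho)^{d/\alpha}\,ds\le C\,(1+t)^{-k_i}(1+t^\rho)^{d/\alpha}.\]

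This time integral is the main technical obstacle. Near $s=0$ it is integrable because $\sigma_i>-1$; at large $s$ the exponents on both sides match exactly by the construction of $k_i,k_j$. For the inductive machinery to close, one must also ensure that $f(t):=(1+t)^{-k_i}(1+t^\rho)^{d/\alpha}$ stays bounded below on $[0,\infty)$ so that the inhomogeneous term $\varepsilon p(1+t^\rho,x)$ fits inside $U_i$; this requirement is equivalent to $\max\{k_i,k_j\}<d\rho/\alpha$, which, after substituting the explicit formulas for $k_i$ and $k_j$, is precisely the hypothesis of the theorem. With $M$ taken large enough compared to $1/\inf_t f(t)$ and $\varepsilon$ small enough to absorb $(M\varepsilon)^{\beta_i-1}$ in front of $cC$, the induction closes. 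Because the iterates are nondecreasing in $n$ (by monotonicity of $v\mapsto v^{\beta_i}$ on $[0,\infty)$) and uniformly dominated by $U_i$, pointwise and dominated convergence produce a mild solution satisfying the stated bound with $k:=\min\{k_i,k_j\}>0$.
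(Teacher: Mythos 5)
Your proposal is correct and rests on the same core idea as the paper's proof: an explicit supersolution of the form $(1+t)^{-k_i}(1+t^{\rho})^{d/\alpha}p(1+t^{\rho},x)$, with the decay exponents determined by the linear system $\beta_i k_j-k_i=1+\sigma_i$, $\beta_j k_i-k_j=1+\sigma_j$ (your $k_i$ is exactly the paper's $\theta_i$), closed by the scaling/unimodality bound $p(\tau,y)^{\beta}\leq p(1,0)^{\beta-1}\tau^{-d(\beta-1)/\alpha}p(\tau,y)$ and the Chapman--Kolmogorov identity. The implementations differ genuinely. The paper substitutes $t\mapsto e^{t}-1$ in time and rescales space by $h(t)=(1+(e^{t}-1)^{\rho})^{1/\alpha}$, so that the candidate bound becomes a pure exponential $c\varepsilon e^{-(\theta_i+\eta_i)t}p(1,x)$; it then verifies a strict supersolution inequality and invokes its comparison Lemma \ref{lemacompa}, using $\rho\leq 1$ to get $(e^{s}-1)^{\rho}+1\geq e^{\rho s}$ and hence $h(t)^{-d}\leq e^{-d\rho t/\alpha}$. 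You instead work in the original variables with a monotone Picard iteration, which avoids the change of variables entirely, replaces the comparison lemma by monotone convergence of the iterates, and reduces everything to the one time integral you display; that integral does close, since the hypothesis $k_i<d\rho/\alpha$ is exactly the condition that the integrand's exponent at infinity, $\sigma_i-\beta_i k_j+\rho d/\alpha=-1-k_i+\rho d/\alpha$, exceeds $-1$, after which the powers match by construction. Two small remarks: the hypothesis $\max\{k_i,k_j\}<d\rho/\alpha$ is needed not only for $\inf_t(1+t)^{-k_i}(1+t^{\rho})^{d/\alpha}>0$ (which would survive equality) but, more essentially, for that time integral to grow at the matching rate rather than picking up a logarithm; and your argument never appears to use $\rho\leq 1$, so your route is, if anything, slightly more general than the paper's on this point.
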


In \cite{Fujita} Fujita shown (for the case $\alpha _{1}=\alpha _{2}=2,$ $%
\rho _{1}=\rho _{2}=1,$ $\sigma _{1}=\sigma _{2}=0$ and $\varphi
_{1}=\varphi _{2}$ in (\ref{wcs})) that $d=\alpha _{1}/\beta _{1}$ is the
critical dimension for blow up of (classical) solutions of (\ref{wcs}): if $%
d>\alpha _{1}/\beta _{1}$, then (\ref{wcs}) admits a global solution for all
sufficiently small initial conditions, whereas if $d<\alpha _{1}/\beta _{1}$%
, then for any non vanishing initial condition the solution blow up.

Since Fujita's pioneering work there are in the actuality a lot extensions.
For example, some works consider bounded domains, systems of equations,
others consider more general generators like elliptic operators, fractional
operators, etc (see \cite{Bai}, \cite{EH}, \cite{G-K-1}, \cite{moch}, \cite%
{Q-H}, \cite{Sug} and the references there in).

In this more general context some new phenomenon occurs. We mention some of
them:

\begin{itemize}
\item In general, the election of $r_{i}$ in $\left\Vert \varphi
_{i}\right\Vert _{r_{i}}$ depend of the choice of $\Delta $. But if, $\alpha
_{j}\rho _{i}=\alpha _{i}\rho _{j}$ then $r_{i}$ is independent of $\Delta $%
. As a particular case, if $\alpha _{1}=\alpha _{2}=2$ and $\rho _{1}=\rho
_{2}=1$ in (\ref{wcs}) then our result coincides with the Uda result (see
Theorem 4.2 in \cite{Uda}).

\item We observed that the estimations for global solutions depend on the
generator of $\Delta _{\alpha _{a}}$. On the other hand, the blow up
estimations depends on $\Delta _{\alpha _{b}}$ (see the results in \cite%
{villa}). From the interpretations of (\ref{wcs}) given in the introduction
of \cite{P-V} we could say that the blow up depends of the slow (diffusion)
motion of the particles and contrary the global existence of the fast motion
of the particles.

\item In the literature, the usual way of deal with the estimations required
for the solutions of (\ref{ecintegrl}) is throw the properties of the heat
equation, now we do not have such properties. To derive $L^{p}$ bounds for
the solutions we use a comparison result and the Banach fix point theorem
(see Lemmas \ref{lemacompa} and \ref{aeg}). A similar method was used in the
proof of Theorem \ref{exiconicon}.
\end{itemize}

In applied mathematics it is well known the importance of the study of
equations like (\ref{wcs}). In fact, for example, they arise in fields like
molecular biology, hydrodynamics and statistical physics \cite{S-Z}. Also,
notice that generators of the form $g_{i}\left( t\right) \Delta _{\alpha
_{i}}$ arise in models of anomalous growth of certain fractal interfaces 
\cite{M-W}.

The paper is organized as follows. In Section 2 we give some properties of
the symmetric $\alpha $-stable densities and provide some preliminary
results. In Section 3 we prove the main result, its corollary and Theorem %
\ref{exiconicon}.

\section{Preliminary results}

Let us start dealing with some properties of $p_{i}$.

\begin{lemma}
\label{pdd} Let $s,t>0$ and $x,y\in \mathbb{R}^{d}$, then

\begin{enumerate}
\item $p_{i}(ts,x)=t^{-d/\alpha _{i}}p_{i}(s,t^{-1/\alpha _{i}}x).$

\item $p_{i}(t,x)\geq \left( \frac{s}{t}\right) ^{d/\alpha _{i}}p_{i}(s,x)$%
,\ \ for $t\geq s.$
\end{enumerate}
\end{lemma}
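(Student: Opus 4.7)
The plan is to treat part (1) as a Fourier-analytic scaling identity and then bootstrap it into part (2) by combining with the radial monotonicity of the stable density.

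For part (1), I would start from the Fourier representation of $p_{i}$, namely $p_{i}(t,x) = (2\pi)^{-d}\int_{\mathbb{R}^{d}} e^{-ix\cdot\xi}\,e^{-t|\xi|^{\alpha_{i}}}\,d\xi$, which is the standard description of the symmetric $\alpha_{i}$-stable density as the fundamental solution of $\partial_{t}-\Delta_{\alpha_{i}}$. Writing $p_{i}(ts,x)$ in this form and performing the change of variables $\eta=t^{1/\alpha_{i}}\xi$ produces the factor $t^{-d/\alpha_{i}}$ from the Jacobian and replaces $x$ by $t^{-1/\alpha_{i}}x$ in the oscillatory exponential, yielding the claimed identity. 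This is routine.

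For part (2), I would apply (1) with the parameters chosen so that $ts$ in (1) plays the role of $t$ here. Setting $r=t/s\geq 1$, part (1) gives
\begin{equation*}
p_{i}(t,x)=p_{i}(rs,x)=r^{-d/\alpha_{i}}p_{i}\!\left(s,r^{-1/\alpha_{i}}x\right)=\left(\frac{s}{t}\right)^{d/\alpha_{i}}p_{i}\!\left(s,(s/t)^{1/\alpha_{i}}x\right).
\end{equation*}
It then suffices to show $p_{i}(s,(s/t)^{1/\alpha_{i}}x)\geq p_{i}(s,x)$. Since $(s/t)^{1/\alpha_{i}}\leq 1$, the point $(s/t)^{1/\alpha_{i}}x$ has smaller Euclidean norm than $x$, so the inequality reduces to the statement that $p_{i}(s,\cdot)$ is a radially nonincreasing function of $|x|$.

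The main obstacle, therefore, is justifying radial monotonicity of the symmetric $\alpha_{i}$-stable density. For $\alpha_{i}=2$ this is immediate from the Gaussian formula. For $0<\alpha_{i}<2$ it is a classical result: radial symmetry follows from the rotational invariance of $e^{-t|\xi|^{\alpha_{i}}}$, and unimodality (hence monotonicity in $|x|$) follows because $|\xi|^{\alpha_{i}}$ determines a radially symmetric $\alpha_{i}$-stable law, which is well known to be unimodal. I would simply invoke this property (citing a standard reference on stable distributions) rather than reprove it, and then the two displayed steps above close the argument.
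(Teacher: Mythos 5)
Your proof is correct; the paper itself gives no argument for this lemma, simply citing Section 2 of \cite{Sug}, where exactly this combination of Fourier scaling for part (1) and scaling plus radial monotonicity for part (2) appears. The only point worth making explicit is the radial monotonicity of $p_{i}(s,\cdot)$ for $0<\alpha_{i}<2$, which you invoke from the literature: the cleanest justification is that the isotropic $\alpha_{i}$-stable density is a scale mixture of Gaussians (subordination of the heat kernel by an $\alpha_{i}/2$-stable subordinator), and each Gaussian in the mixture is radially nonincreasing, so the mixture is as well.
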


\begin{proof}
See Section 2 in \cite{Sug}.\hfill
\end{proof}

\begin{lemma}
\label{cdd} There exists a constant $c\geq 1$ such that 
\begin{equation}
p_{i}(t,x)\leq cp_{a}(t^{\alpha _{a}/\alpha _{i}},x),\ \ \forall (t,x)\in
(0,\infty )\times \mathbb{R}^{d},  \label{eq:jvest1}
\end{equation}%
where $\alpha _{a}$ is defined in (\ref{defa}).
\end{lemma}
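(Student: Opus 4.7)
The strategy is to eliminate the time variable using the scaling identity of Lemma \ref{pdd}(1) and then reduce the claim to a pointwise comparison of the two densities at time $1$.

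First I would apply Lemma \ref{pdd}(1) to both sides. Taking $s=1$ in that identity gives $p_i(t,x)=t^{-d/\alpha_i}p_i(1,t^{-1/\alpha_i}x)$, and similarly
\begin{equation*}
p_a(t^{\alpha_a/\alpha_i},x)=\bigl(t^{\alpha_a/\alpha_i}\bigr)^{-d/\alpha_a}p_a\bigl(1,(t^{\alpha_a/\alpha_i})^{-1/\alpha_a}x\bigr)=t^{-d/\alpha_i}p_a(1,t^{-1/\alpha_i}x).
\end{equation*}
The two prefactors coincide, so writing $y=t^{-1/\alpha_i}x$ the statement \eqref{eq:jvest1} collapses to the time-free inequality
\begin{equation*}
p_i(1,y)\leq c\,p_a(1,y),\qquad y\in\mathbb{R}^d.
\end{equation*}

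Next I would dispose of the trivial case $i=a$, where $c=1$ works, and handle $i=b$ (so $\alpha_b\geq\alpha_a$). Both densities are continuous and strictly positive on $\mathbb{R}^d$, so the ratio is bounded on any compact set; the real content is the tail behaviour. For this I would invoke the classical two-sided estimates for symmetric $\alpha$-stable densities: when $0<\alpha<2$ there are constants $c_1(d,\alpha),c_2(d,\alpha)>0$ such that
\begin{equation*}
c_1(1+|y|)^{-(d+\alpha)}\leq p(1,y)\leq c_2(1+|y|)^{-(d+\alpha)},
\end{equation*}
and for $\alpha=2$ the density is Gaussian with super-polynomial decay. Since $\alpha_a\leq\alpha_b$, the tail of $p_a(1,\cdot)$ dominates that of $p_b(1,\cdot)$, so the ratio $p_b(1,y)/p_a(1,y)$ is bounded by a multiple of $(1+|y|)^{\alpha_a-\alpha_b}\leq 1$ for large $|y|$. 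Combining with the bound on compacts yields the constant $c$.

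The only step that requires care is the borderline situation where one of the indices equals $2$: if $\alpha_b=2$ and $\alpha_a<2$, then one compares Gaussian against heavy-tailed stable, which is immediate because the Gaussian decays faster than any polynomial; the case $\alpha_a=2$ forces $\alpha_b=2$ and is trivial. So the main obstacle is really bookkeeping rather than substance, provided one is willing to quote the standard tail asymptotics of stable densities (the paper does cite such properties from \cite{Sug}).
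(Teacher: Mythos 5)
Your argument is correct, but it is not the route the paper takes: the paper's entire ``proof'' of Lemma \ref{cdd} is a citation of Lemma 2.4 in \cite{M-V}, so you have in effect reconstructed the content of that external lemma from scratch. Your reduction is clean: Lemma \ref{pdd}(1) with $s=1$ gives $p_i(t,x)=t^{-d/\alpha_i}p_i(1,t^{-1/\alpha_i}x)$ and $p_a(t^{\alpha_a/\alpha_i},x)=t^{-d/\alpha_i}p_a(1,t^{-1/\alpha_i}x)$, so the claim collapses to $p_i(1,y)\leq c\,p_a(1,y)$, and since $\alpha_a=\min\{\alpha_1,\alpha_2\}$ the only nontrivial case is $i=b$ with $\alpha_b\geq\alpha_a$; there the two-sided Blumenthal--Getoor estimate $p^{(\alpha)}(1,y)\asymp(1+|y|)^{-(d+\alpha)}$ for $\alpha<2$ (together with the Gaussian case $\alpha=2$, which only arises for the upper index $b$) gives a bounded ratio at infinity, and continuity plus strict positivity of $p_a(1,\cdot)$ handles compact sets. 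Note that the lower bound on $p_a(1,\cdot)$ is genuinely needed here, both on compacts and in the tail, so you are quoting slightly more than the paper does elsewhere (the paper only uses $p_a(1,x)\to 0$ from \cite{B-G} in Lemma \ref{estmude}; the full two-sided asymptotics are in Theorem 2.1 of \cite{B-G}, not in \cite{Sug} as you suggest). What your approach buys is a self-contained and transparent proof that makes visible exactly why the \emph{smaller} index $\alpha_a$ must sit on the right-hand side (heavier tail dominates); what the paper's citation buys is brevity. Either is acceptable.
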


\begin{proof}
The inequality (\ref{eq:jvest1}) follows from Lemma 2.4 in \cite{M-V}.\hfill
\end{proof}

In what follows we will use $c$ to denote a positive and finite constant
whose value may vary from place to place.

For each bounded and measurable function $f:\mathbb{R}^{d}\rightarrow 
\mathbb{R}$ we have the semigroup property (in probability it is called the
Chapman-Kolmogorov equation):%
\begin{equation}
\int_{\mathbb{R}^{d}}\left( \int_{\mathbb{R}^{d}}f(z)p_{i}(t,y-z)dz\right)
p_{i}(s,x-y)dy=\int_{\mathbb{R}^{d}}f(y)p_{i}(t+s,x-y)dy.  \label{smgprop}
\end{equation}

\medskip

\begin{lemma}
\label{estmude} Let $\mu \geq 1$, then%
\begin{equation*}
\left\Vert p_{a}(t,\cdot )\right\Vert _{\mu }=ct^{-\frac{d}{\alpha _{a}}%
\left( 1-\frac{1}{\mu }\right) }.
\end{equation*}
\end{lemma}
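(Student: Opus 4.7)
The plan is to derive the stated bound by pure scaling, using part (1) of Lemma \ref{pdd}. Writing $p_a(t,x) = p_a(t \cdot 1, x) = t^{-d/\alpha_a}\, p_a(1,\, t^{-1/\alpha_a} x)$, I would substitute into the definition of the $L^\mu$ norm to get
\begin{equation*}
\|p_a(t,\cdot)\|_\mu^\mu = \int_{\mathbb{R}^d} t^{-d\mu/\alpha_a}\, p_a(1, t^{-1/\alpha_a} x)^\mu\, dx,
\end{equation*}
and then perform the change of variables $y = t^{-1/\alpha_a} x$, which contributes a Jacobian factor $t^{d/\alpha_a}$. This collapses the integral to $t^{-d(\mu-1)/\alpha_a}\, \|p_a(1,\cdot)\|_\mu^\mu$; taking the $\mu$-th root yields the exponent $-\tfrac{d}{\alpha_a}(1 - 1/\mu)$ as claimed, with $c = \|p_a(1,\cdot)\|_\mu$.

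Before declaring the proof complete, I would verify that $c$ is a finite positive constant for every $\mu \geq 1$. For $\mu = 1$ this is immediate since $p_a(1,\cdot)$ is a probability density, so $c = 1$. For $\mu > 1$, I would use the well-known fact that the symmetric $\alpha$-stable density is bounded in $x$, i.e., $\|p_a(1,\cdot)\|_\infty < \infty$, which together with $\|p_a(1,\cdot)\|_1 = 1$ gives
\begin{equation*}
\|p_a(1,\cdot)\|_\mu \leq \|p_a(1,\cdot)\|_\infty^{1-1/\mu}\, \|p_a(1,\cdot)\|_1^{1/\mu} < \infty
\end{equation*}
by the standard interpolation inequality. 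Positivity of $c$ is obvious since $p_a(1,\cdot)$ is strictly positive.

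There is no real obstacle here: the whole argument is a one-line scaling computation followed by a finiteness check. The only point that requires caution is invoking the right normalization of $p_a$ in the identity $p_a(ts,x) = t^{-d/\alpha_a} p_a(s, t^{-1/\alpha_a} x)$ with $s=1$, which matches exactly how Lemma \ref{pdd}(1) is stated. Thus the computation closes, and the constant $c$ can be identified explicitly as $\|p_a(1,\cdot)\|_\mu$.
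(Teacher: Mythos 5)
Your proof is correct and follows essentially the same route as the paper: the identity is obtained by the scaling relation of Lemma \ref{pdd}(1) followed by the change of variables $y=t^{-1/\alpha_a}x$, with $c=\Vert p_a(1,\cdot)\Vert_\mu$. The only difference is in verifying $c<\infty$: you use boundedness of $p_a(1,\cdot)$ plus the $L^1$--$L^\infty$ interpolation inequality, whereas the paper invokes the Blumenthal--Getoor decay $p_a(1,x)\to 0$ as $|x|\to\infty$ and splits the integral over a ball and its complement; both are valid and equally elementary.
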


\begin{proof}
By (1) in Lemma \ref{pdd} we get%
\begin{equation*}
\left\Vert p_{a}(t,\cdot )\right\Vert _{\mu }^{\mu }=t^{-d\mu /\alpha
_{a}}\int_{\mathbb{R}^{d}}p_{a}(1,t^{-1/\alpha _{a}}x)^{\mu }dx.
\end{equation*}%
The change of variable $z=t^{-1/\alpha _{a}}x$ implies%
\begin{equation*}
\left\Vert p_{a}(t,\cdot )\right\Vert _{\mu }^{\mu }=t^{\frac{d}{\alpha _{a}}%
(1-\mu )}\int_{\mathbb{R}^{d}}p_{a}\left( 1,z\right) ^{\mu }dz.
\end{equation*}%
From Theorem 2.1 in \cite{B-G} we have%
\begin{equation*}
\lim\limits_{\left\vert x\right\vert \rightarrow \infty }p_{a}(1,x)=0.
\end{equation*}%
Hence there exists $r>0$ such that%
\begin{equation*}
p_{a}(1,x)\leq 1,\ \ \forall \left\vert x\right\vert >r.
\end{equation*}%
Using this information we have
\begin{eqnarray*}
\int_{\mathbb{R}^{d}}p_{a}(1,x)^{\mu }dx &\leq &\int_{\left\vert
x\right\vert \leq r}p_{a}(1,x)^{\mu }dx+\int_{\left\vert x\right\vert
>r}p_{a}(1,x)dx \\
&\leq &\int_{\left\vert x\right\vert \leq r}p_{a}(1,x)^{\mu }dx+\int_{%
\mathbb{R}^{d}}p_{a}(1,x)dx \\
&\leq &\left\Vert p_{a}(1,\cdot )^{\mu }1_{\bar{B}(0,r)}(\cdot )\right\Vert
_{\infty }+1<\infty .
\end{eqnarray*}%
We used that $\left( p_{a}(1,\cdot )\right) ^{\mu }$ is a continuous
function on the compact set $\bar{B}(0,r)$ (the closed ball with center at
the origin and radius $r>0$) and $p_{a}(1,x)$ is a density.\hfill
\end{proof}

\medskip

We shall later require several times the following auxiliary tool.

\begin{lemma}
\label{lemacompa} Let $\varphi _{i}:\mathbb{R}^{d}\rightarrow \left[
0,\infty \right) $, $f:\left[ 0,\infty \right) \times \mathbb{R}^{d}\times 
\mathbb{R}^{d}\rightarrow \left[ 0,\infty \right) $ and $g:\left[ 0,\infty
\right) \times \left[ 0,\infty \right) \times \mathbb{R}^{d}\times \mathbb{R}%
^{d}\rightarrow \left[ 0,\infty \right) $ be continuous functions. Suppose
that for each $t\geq 0$ and $x\in \mathbb{R}^{d}$ the real-valued,
non-negative continuous functions $u_{i}$, $v_{i}$ satisfies 
\begin{equation*}
u_{i}(t,x)>\int_{\mathbb{R}^{d}}f(t,x,y)\varphi _{i}(y)dy+\int_{0}^{t}\int_{%
\mathbb{R}^{d}}g(t,s,x,y)u_{j}^{\beta _{i}}(s,y)dyds,
\end{equation*}%
and 
\begin{equation*}
v_{i}(t,x)\leq \int_{\mathbb{R}^{d}}f(t,x,y)\varphi
_{i}(y)dy+\int_{0}^{t}\int_{\mathbb{R}^{d}}g(t,s,x,y)v_{j}^{\beta
_{i}}(s,y)dyds.
\end{equation*}%
Then $u_{i}(t,x)\geq v_{i}(t,x)$, for each $(t,x)\in \left[ 0,\infty \right)
\times \mathbb{R}^{d}$.
\end{lemma}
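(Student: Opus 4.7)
The plan is to argue by contradiction, using the strict inequality for $u_i$ to propagate the ordering $u_i \geq v_i$ forward in time. Evaluating the two hypotheses at $t=0$ kills the time-integral term and yields
$$u_i(0,x) > \int_{\mathbb{R}^d} f(0,x,y)\varphi_i(y)\,dy \geq v_i(0,x), \qquad \forall x \in \mathbb{R}^d,$$
so the comparison is strict on the initial slice.

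Next, I would set
$$T := \sup\{ t \geq 0 : u_i(s,x) \geq v_i(s,x) \text{ for all } s \in [0,t],\ x \in \mathbb{R}^d,\ i\in\{1,2\} \}.$$
The initial-slice step combined with continuity in $t$ of $u_i - v_i$ gives $T > 0$. Suppose toward a contradiction that $T < \infty$. Continuity in $t$ extends the weak inequality $u_i \geq v_i$ to the closed slab $[0,T]\times\mathbb{R}^d$, and therefore $u_j^{\beta_i} \geq v_j^{\beta_i}$ there. Substituting into the two hypotheses and using non-negativity of $g$ yields at $t=T$ and every $x\in\mathbb{R}^d$,
\begin{align*}
u_i(T,x)
&> \int_{\mathbb{R}^d} f(T,x,y)\varphi_i(y)\,dy + \int_0^T\!\!\int_{\mathbb{R}^d} g(T,s,x,y) u_j^{\beta_i}(s,y)\,dy\,ds\\
&\geq \int_{\mathbb{R}^d} f(T,x,y)\varphi_i(y)\,dy + \int_0^T\!\!\int_{\mathbb{R}^d} g(T,s,x,y) v_j^{\beta_i}(s,y)\,dy\,ds\\
&\geq v_i(T,x).
\end{align*}
Thus $u_i(T,\cdot) > v_i(T,\cdot)$ strictly everywhere, and a $t$-continuity argument should extend this to a slab $[T,T+\delta]\times\mathbb{R}^d$, contradicting the maximality of $T$.

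The main obstacle is that $\mathbb{R}^d$ is not compact, so pointwise strict inequality at $t=T$ does not automatically yield a uniform-in-$x$ slab on which $u_i \geq v_i$. To close this gap I would work along a sequence $(t_n, x_n)$ with $t_n \downarrow T$ and $u_i(t_n, x_n) < v_i(t_n, x_n)$: combining $u_i(T, x_n) \geq v_i(T, x_n)$ with continuity of $f, g, u_i, v_i$ and dominated-convergence estimates on the two integral terms, the strict margin at $(T, x_n)$ forced by the previous display must vanish as $n \to \infty$, which contradicts the strict inequality in the hypothesis on $u_i$ at $(T, x^*)$ for any limit point $x^*$ of $(x_n)$ in $\mathbb{R}^d$ (the case $|x_n| \to \infty$ being handled similarly by the continuity and non-negativity of all data together with the strict-margin estimate).
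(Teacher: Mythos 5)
Your proof is, at its core, the same first-crossing-time argument the paper uses: exploit the strict inequality satisfied by $u_i$, the monotonicity of $r\mapsto r^{\beta_i}$, and the non-negativity of $g$ to show that the comparison $u_i\ge v_i$ cannot first fail at a finite time. The only structural difference is packaging: the paper defines, for each component, the set $N_i$ of times $t$ at which $u_i(t,\cdot)>v_i(t,\cdot)$ holds everywhere, sets $t_i=\sup N_i$, proves $t_i=t_j$ by exactly the substitution you perform at $t=T$, and then derives the contradiction $0=u_i(t_i,x)-v_i(t_i,x)>0$ from continuity in $t$; your single time $T$ for both components is a mild simplification, and your computation at $t=T$ matches the paper's line for line.

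The substantive issue is the non-compactness obstacle you flag at the end, and there your proposal does not close the gap it opens. Strict pointwise inequality on the slice $\{T\}\times\mathbb{R}^d$ carries no uniform margin, and if the violating points $x_n$ escape to infinity there is no limit point $x^*$ at which to land a contradiction; the parenthetical claim that this case is ``handled similarly by continuity and non-negativity'' is not an argument, because continuity and non-negativity alone do not exclude a first crossing ``at infinity.'' The same problem already undermines your assertion that $T>0$: strict inequality at $t=0$ for every $x$ does not yield a slab $[0,\delta]\times\mathbb{R}^d$ on which $u_i\ge v_i$ unless the margin and the modulus of continuity in $t$ are uniform in $x$. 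To be fair, the paper's own proof has exactly the same lacuna --- the step ``the continuity of $(u_i-v_i)(\cdot,x)$ yields $0=u_i(t_i,x)-v_i(t_i,x)$'' tacitly assumes that the failure at time $t_i$ is witnessed at some finite $x$, and also that $N_i$ is an interval, neither of which is justified --- so you have reproduced the published argument, gap included, while being more candid about where the weak point sits. Closing it honestly would require structure not stated in the lemma (decay of $u_i,v_i$ at infinity, or uniform continuity on slabs coming from the explicit kernels to which the lemma is later applied).
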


\begin{proof}
Define 
\begin{equation*}
N_{i}=\left\{ t\geq 0:u_{i}(t,x)>v_{i}(t,x),\ \ \forall x\in \mathbb{R}%
^{d}\right\} .
\end{equation*}%
It is clear that $N_{i}\neq \oslash $ ($0\in N_{i}$). Let $t_{i}=\sup
N_{i}\in \lbrack 0,\infty ]$. We have the following cases.

\textbf{$t_{i}<\infty $ and $t_{j}<\infty $:} First observe that 
\begin{equation*}
u_{i}(t_{j},x)-v_{i}(t_{j},x)>\int_{0}^{t_{j}}\int_{\mathbb{R}%
^{d}}g(t,s,x,y)\{u_{j}^{\beta _{i}}(s,y)-v_{j}^{\beta _{i}}(s,y)\}dyds\geq 0,
\end{equation*}%
we used that the function $r\mapsto r^{\beta _{i}}$ is increasing. This
implies $t_{i}\geq t_{j}$. Analogously we deduce $t_{j}\geq t_{i}$.
Therefore $t_{i}=t_{j}$. The continuity of $(u_{i}-v_{i})(\cdot ,x)$ yields 
\begin{equation*}
0=u_{i}(t_{i},x)-v_{i}(t_{i},x)>\int_{0}^{t_{i}}\int_{\mathbb{R}%
^{d}}g(t,s,x,y)\{u_{j}^{\beta _{i}}(s,y)-v_{j}^{\beta _{i}}(s,y)\}dyds\geq 0.
\end{equation*}%
Which is a contradiction.

\textbf{$t_{i}=\infty $ and $t_{j}<\infty $ (or $t_{i}<\infty $ and $%
t_{j}=\infty $):} Here we have 
\begin{equation*}
0=u_{j}(t_{j},x)-v_{j}(t_{j},x)>\int_{0}^{t_{j}}\int_{\mathbb{R}%
^{d}}g(t,s,x,y)\{u_{i}^{\beta _{j}}(s,y)-v_{i}^{\beta _{j}}(s,y)\}dyds\geq 0.
\end{equation*}%
This also leads to a contradiction.

In this way, the only possibility is $t_{i}=\infty $ and $t_{j}=\infty $.
\hfill
\end{proof}

\begin{remark}
\label{remlema} Analogously, if for each $(t,x)\in \left[ 0,\infty \right)
\times \mathbb{R}^{d}$ the continuous functions $u_{i}$, $v_{i}$ satisfies 
\begin{equation*}
u_{i}(t,x)\geq\int_{\mathbb{R}^{d}}f(t,x,y)\varphi
_{i}(y)dy+\int_{0}^{t}\int_{\mathbb{R}^{d}}g(t,s,x,y)u_{j}^{\beta
_{i}}(s,y)dyds,
\end{equation*}%
and 
\begin{equation*}
v_{i}(t,x)< \int_{\mathbb{R}^{d}}f(t,x,y)\varphi _{i}(y)dy+\int_{0}^{t}\int_{%
\mathbb{R}^{d}}g(t,s,x,y)v_{j}^{\beta _{i}}(s,y)dyds,
\end{equation*}%
then $u_{i}(t,x)\geq v_{i}(t,x)$, for each $(t,x)\in \left[ 0,\infty \right)
\times \mathbb{R}^{d}$.
\end{remark}

\medskip

Let $s_{j}\geq 1$ and define 
\begin{equation*}
E_{\tau }=\left\{ u:[0,\tau ]\rightarrow L^{\infty }(\mathbb{R}^{d})\cap
L^{s_{j}}(\mathbb{R}^{d}),\ |||u|||<\infty \right\} ,
\end{equation*}%
where 
\begin{equation*}
|||u|||=\sup_{0\leq t\leq \tau }\left\{ ||u(t)||_{\infty
}+||u(t)||_{s_{j}}\right\} .
\end{equation*}%
Let $R>0$ and set 
\begin{equation*}
P_{\tau }=\left\{ u\in E_{\tau }:u\geq 0\right\} ,\ \ B_{R}=\left\{ u\in
E_{\tau }:|||u|||\leq R\right\} .
\end{equation*}%
Since $E_{\tau }$ is a Banach space and $P_{\tau }$, $B_{R}$ are closed
subspaces of $E_{\tau }$, then they are also Banch spaces. Let us also
define the functions $f_{j}:\{(t,s)\in \mathbb{R}^{2}:t\geq s\geq
0\}\rightarrow \mathbb{R}$ as%
\begin{equation*}
f_{j}(t,s)=(t^{\rho _{j}}-s^{\rho _{j}})^{\alpha _{a}/\alpha _{j}},
\end{equation*}%
and $f_{j}(t):=f_{j}(t,0)$, for $j\in \{1,2\}.$

\begin{lemma}
\label{aeg}If $\varphi _{j}\in L_{+}^{\infty }(\mathbb{R}^{d})\cap
L_{+}^{r_{j}}(\mathbb{R}^{d})$, $r_{j}\geq 1$, for $j\in \{1,2\}$ and $c>0$,
then the integral equation 
\begin{eqnarray*}
v_{j}(t,x) &=&c\int_{\mathbb{R}^{d}}p_{a}(f_{j}(t),y-x)\varphi _{j}(y)dy \\
&&+c\int_{0}^{t}\int_{\mathbb{R}^{d}}s^{\sigma
_{j}}p_{a}(f_{j}(t,s)+f_{i}(s),z-x)\varphi _{i}^{\beta _{j}}(z)dzds \\
&&+c\int_{0}^{t}\int_{\mathbb{R}^{d}}s^{\sigma _{j}+\beta
_{j}-1}\int_{0}^{s}p_{a}(f_{j}(t,s)+f_{i}(s,r),z-x) \\
&&\times r^{\beta _{j}\sigma _{i}}v_{j}^{\beta _{j}\beta _{i}}(r,z)drdzds
\end{eqnarray*}%
has a unique solution $v_{j}\in L^{\infty }([0,\tau ],L_{+}^{\infty }(%
\mathbb{R}^{d})\cap L_{+}^{s_{j}}(\mathbb{R}^{d}))$, for some $\tau >0$,
when 
\begin{equation}
s_{j}\geq r_{j}\text{ \ and \ }s_{j}\beta _{j}\geq r_{i}.  \label{prcond}
\end{equation}
\end{lemma}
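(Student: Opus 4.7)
The plan is to apply Banach's fixed point theorem to the operator $T:P_\tau\to P_\tau$ that sends $v_j$ to the right-hand side of the stated integral equation. I will exhibit $R>0$ and then $\tau>0$ such that $T$ maps $B_R\cap P_\tau$ into itself and is a strict contraction there; since $B_R\cap P_\tau$ is closed in the Banach space $E_\tau$ and $T$ preserves non-negativity (its kernel is a non-negative density and $\varphi_i,\varphi_j\ge 0$), this will produce the unique solution $v_j$.

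Step 1 (self-map). For $v_j\in B_R\cap P_\tau$ I would estimate each of the three summands of $Tv_j(t)$ in $L^\infty$ and in $L^{s_j}$. The $L^\infty$ bounds are routine because $p_a(t,\cdot)$ is a probability density, giving
\[
\|Tv_j(t)\|_\infty\le c\|\varphi_j\|_\infty+c\|\varphi_i\|_\infty^{\beta_j}\!\int_0^t s^{\sigma_j}ds+cR^{\beta_j\beta_i}\!\int_0^t\!\!\int_0^s s^{\sigma_j+\beta_j-1}r^{\beta_j\sigma_i}dr\,ds,
\]
all time integrals being finite and vanishing as $\tau\to 0$ thanks to $\sigma_i,\sigma_j>-1$ and $\beta_j>1$. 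For the $L^{s_j}$ norm I would apply Young's inequality $\|p*g\|_{s_j}\le\|p\|_\mu\|g\|_q$ with $1+1/s_j=1/\mu+1/q$ to the first two summands: take $q=r_j$ in the first (so $\mu\ge 1$ follows from the hypothesis $s_j\ge r_j$) and $q=r_i/\beta_j$ in the second, using $\|\varphi_i^{\beta_j}\|_{r_i/\beta_j}=\|\varphi_i\|_{r_i}^{\beta_j}$ (so $\mu\ge 1$ follows from $s_j\beta_j\ge r_i$); Lemma~\ref{estmude} then converts $\|p_a\|_\mu$ into negative powers of $f_j(t)$ and $f_j(t,s)+f_i(s)$. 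For the third summand I would use $\|p\|_1=1$ together with the interpolation $\|v_j^{\beta_j\beta_i}\|_{s_j}=\|v_j\|_{s_j\beta_j\beta_i}^{\beta_j\beta_i}\le\|v_j\|_{s_j}\|v_j\|_\infty^{\beta_j\beta_i-1}\le R^{\beta_j\beta_i}$. Adding up, $|||Tv_j|||\le A+B(\tau)R^{\beta_j\beta_i}$ where $A$ depends only on $\|\varphi_i\|_{r_i},\|\varphi_j\|_{r_j}$ and the $L^\infty$ norms, and $B(\tau)\to 0$ as $\tau\to 0^+$.

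Step 2 (contraction and conclusion). For $v,w\in B_R\cap P_\tau$, only the third summand contributes to $Tv-Tw$. The elementary bound $|a^{\beta_j\beta_i}-b^{\beta_j\beta_i}|\le\beta_j\beta_i(a\vee b)^{\beta_j\beta_i-1}|a-b|$ together with the same Young/interpolation estimates as in Step 1 gives $|||Tv-Tw|||\le cR^{\beta_j\beta_i-1}B(\tau)|||v-w|||$. Choosing first $R=2A$ and then $\tau$ so small that $B(\tau)R^{\beta_j\beta_i}\le R/2$ and $cR^{\beta_j\beta_i-1}B(\tau)\le 1/2$, Banach's theorem furnishes a unique fixed point of $T$ in $B_R\cap P_\tau$, which is the desired $v_j$.

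The main obstacle is the bookkeeping of the time exponents. After Young and Lemma~\ref{estmude}, the first two summands give integrals of the form $\int_0^t s^{a}(t^{\rho_j}-s^{\rho_j})^{b}ds$ with $b=-\tfrac{d}{\alpha_a}(1-1/\mu)\cdot\tfrac{\alpha_a}{\alpha_j}$; one must check that $\mu$ can be chosen strictly greater than (but close enough to) $1$ so that $b>-1/\rho_j$ holds, guaranteeing integrability at $s=t$, while also respecting the Young lower bound $\mu\ge 1$ imposed by (\ref{prcond}). A beta-function substitution then expresses each such integral as a strictly positive power of $t$, which is exactly the source of $B(\tau)\to 0$; verifying that (\ref{prcond}) leaves enough room for this choice of $\mu$ is the only delicate analytic point, everything else being mechanical.
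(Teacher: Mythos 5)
Your overall architecture --- Banach's fixed point theorem for the map sending $v_j$ to the right-hand side, acting on $B_R\cap P_\tau$, with the $L^\infty$ bounds coming from the density property and the contraction coming only from the third summand via $|a^p-b^p|\le p(a\vee b)^{p-1}|a-b|$ --- is exactly the paper's. The gap is in your $L^{s_j}$ estimates of the first two summands. For the first summand you apply Young's inequality with $q=r_j$, so $1-1/\mu=1/r_j-1/s_j$, and Lemma \ref{estmude} turns $\Vert p_a(f_j(t),\cdot)\Vert_\mu$ into $c\,t^{-\frac{d\rho_j}{\alpha_j}(\frac{1}{r_j}-\frac{1}{s_j})}$. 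When $s_j>r_j$ (the generic case in the application) this is a \emph{negative} power of $t$, so $\sup_{0\le t\le\tau}\Vert Tv_j(t)\Vert_{s_j}=\infty$ and your claim that $|||Tv_j|||\le A+B(\tau)R^{\beta_j\beta_i}$ with $A$ finite fails: the self-map property on $B_R$ cannot be established from these estimates. For the second summand the choice $q=r_i/\beta_j$ is only a legitimate Young exponent when $r_i\ge\beta_j$, which is not among the hypotheses (only $r_i\ge 1$ and $s_j\beta_j\ge r_i$ are assumed); and even when it is legitimate, since $f_j(t,s)+f_i(s)\ge f_i(s)=s^{\rho_i\alpha_a/\alpha_i}$ the kernel factor behaves like $s^{-\frac{d\rho_i}{\alpha_i}(1-1/\mu)}$ near $s=0$, so the $ds$-integral requires an extra integrability condition that (\ref{prcond}) does not supply. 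Your closing remark that the only delicate point is integrability near $s=t$ misses both of these issues.

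The paper's proof avoids all of this by never taking $\mu>1$: it applies Jensen's inequality with respect to the probability density $p_a$ (equivalently, Young with $\mu=1$), which gives the $t$-independent bounds $\Vert\int p_a(f_j(t),y-\cdot)\varphi_j(y)\,dy\Vert_{s_j}\le\Vert\varphi_j\Vert_{s_j}$ and, for the second summand after Minkowski's integral inequality, $\Vert\varphi_i\Vert_{\beta_js_j}^{\beta_j}$; the hypotheses $s_j\ge r_j$ and $s_j\beta_j\ge r_i$ enter only through the interpolation $\Vert\varphi\Vert_{s}\le\Vert\varphi\Vert_\infty^{1-r/s}\Vert\varphi\Vert_r^{r/s}$, which converts these into the data norms $\Vert\varphi_j\Vert_{r_j}$, $\Vert\varphi_i\Vert_{r_i}$. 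All time integrals are then of the form $\int_0^t s^a\,ds$ with $a>-1$ and contribute clean positive powers of $\tau$, which is the true source of your $B(\tau)\to 0$. Replacing your Young step by this Jensen-plus-interpolation step repairs Step 1; your third-summand estimate and Step 2 then go through essentially as in the paper.
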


\begin{proof}
Define the mapping $F:B_{R}\cap P_{\tau }\rightarrow L^{\infty }\left( [0,\tau
],L_{+}^{\infty }(\mathbb{R}^{d})\cap L_{+}^{s_{i}}(\mathbb{R}^{d})\right) $%
, as 
\begin{eqnarray}
F(\varphi )(t,x) &=&c\int_{\mathbb{R}^{d}}p_{a}(f_{j}(t),y-x)\varphi
_{j}(y)dy  \notag \\
&&+c\int_{0}^{t}\int_{\mathbb{R}^{d}}s^{\sigma
_{j}}p_{a}(f_{j}(t,s)+f_{i}(s),z-x)\varphi _{i}^{\beta _{j}}(z)dzds  \notag
\\
&&+c\int_{0}^{t}\int_{\mathbb{R}^{d}}s^{\sigma _{j}+\beta
_{j}-1}\int_{0}^{s}p_{a}(f_{j}(t,s)+f_{i}(s,r),z-x)  \notag \\
&&\times r^{\beta _{j}\sigma _{i}}\varphi ^{\beta _{j}\beta _{i}}(r,z)drdzds.
\label{estFelec}
\end{eqnarray}%
First we are going to see that $F$ is onto $B_{R}\cap P_{\tau }$. We take $(t,x)\in
\lbrack 0,\tau ]\times \mathbb{R}^{d}$ and see 
\begin{eqnarray*}
\left\vert F(\varphi )(t,x)\right\vert &\leq &c\left\Vert \varphi
_{j}\right\Vert _{\infty } \\
&&+c\left\Vert \varphi _{i}\right\Vert _{\infty }^{\beta
_{j}}\int_{0}^{t}s^{\sigma _{j}}ds+cR^{\beta _{j}\beta
_{i}}\int_{0}^{t}\int_{0}^{s}s^{\sigma _{j}+\beta _{j}-1}r^{\beta _{j}\sigma
_{i}}drds \\
&\leq &c\left\Vert \varphi _{j}\right\Vert _{\infty }+c\left\Vert \varphi
_{i}\right\Vert _{\infty }^{\beta _{j}}\tau ^{\sigma _{j}+1}+cR^{\beta
_{j}\beta _{i}}\tau ^{\sigma _{j}+1+\beta _{j}+\beta _{j}\sigma _{i}}.
\end{eqnarray*}%
Now let us deal with the $L^{s_{j}}(\mathbb{R}^{d})$ norm. By Jensen
inequality (see Theorem 14.16 in \cite{Y}) and using $r_{j}\leq s_{j}$ 
\begin{eqnarray*}
\left\Vert \int_{\mathbb{R}^{d}}p_{a}(f_{j}(t),y-\cdot )\varphi
_{j}(y)dy\right\Vert _{s_{j}} &\leq &\left( \int_{\mathbb{R}^{d}}\int_{%
\mathbb{R}^{d}}p_{a}(f_{j}(t),y-x)\varphi _{j}^{s_{j}}(y)dydx\right)
^{1/s_{j}} \\
&=&\left( \int_{\mathbb{R}^{d}}\varphi _{j}^{s_{j}}(y)dy\right) ^{1/s_{j}} \\
&\leq &\left( \int_{\mathbb{R}^{d}}\varphi _{j}(y)^{r_{j}}\left\Vert \varphi
_{j}\right\Vert _{\infty }^{s_{j}-r_{j}}dy\right) ^{1/s_{j}} \\
&=&\left\Vert \varphi _{j}\right\Vert _{\infty }^{1-r_{j}/s_{j}}\left\Vert
\varphi _{j}\right\Vert _{r_{j}}^{r_{j}/s_{j}}.
\end{eqnarray*}%
Analogously, since $s_{j}\beta _{j}\geq r_{i}$ we have by Minkowski integral inequality (see Theorem 23.69 in \cite{Y}) and Jensen inequality 
\begin{align*}
& \left\Vert \int_{0}^{t}\int_{\mathbb{R}^{d}}s^{\sigma
_{j}}p_{a}(f_{j}(t,s)+f_{i}(s),z-\cdot )\varphi _{i}^{\beta
_{j}}(z)dzds\right\Vert _{s_{j}} \\
& \leq \int_{0}^{t}s^{\sigma _{j}}\left\Vert \int_{\mathbb{R}%
^{d}}p_{a}(f_{j}(t,s)+f_{i}(s),z-x)\varphi _{i}^{\beta _{j}}(z)dz\right\Vert
_{s_{j}}ds \\
& \leq \int_{0}^{t}s^{\sigma _{j}}\left( \int_{\mathbb{R}^{d}}\int_{\mathbb{R%
}^{d}}p_{a}(f_{j}(t,s)+f_{i}(s),z-x)\varphi _{i}^{\beta
_{j}s_{j}}(z)dzdx\right) ^{1/s_{j}}ds \\
& =ct^{\sigma _{j}+1}\left( \int_{\mathbb{R}^{d}}\varphi _{i}^{\beta
_{j}s_{j}}(z)dz\right) ^{1/s_{j}}\leq c\tau ^{\sigma _{j}+1}\left\Vert
\varphi _{i}\right\Vert _{r_{i}}^{r_{i}/s_{j}}\left\Vert \varphi
_{i}\right\Vert _{\infty }^{\beta _{j}-r_{i}/s_{j}}.
\end{align*}%
Using that $\beta _{i}\beta _{j}\geq 1$ we get in the third summand in (\ref%
{estFelec}) 
\begin{align*}
& \left\Vert \int_{0}^{t}\int_{\mathbb{R}^{d}}s^{\sigma _{j}+\beta
_{j}-1}\int_{0}^{s}p_{a}(f_{j}(t,s)+f_{i}(s,r),z-\cdot )r^{\beta _{j}\sigma
_{i}}\varphi ^{\beta _{j}\beta _{i}}(r,z)drdzds\right\Vert _{s_{j}} \\
& \leq \int_{0}^{t}\int_{0}^{s}s^{\sigma _{j}+\beta _{j}-1}\left\Vert \int_{%
\mathbb{R}^{d}}p_{a}(f_{j}(t,s)+f_{i}(s,r),z-\cdot )\varphi ^{\beta
_{j}\beta _{i}}(r,z)dz\right\Vert _{s_{j}}r^{\beta _{j}\sigma _{i}}drds \\
& \leq \int_{0}^{t}\int_{0}^{s}s^{\sigma _{j}+\beta _{j}-1}r^{\beta
_{j}\sigma _{i}}\left( \int_{\mathbb{R}^{d}}\varphi ^{\beta _{j}\beta
_{i}s_{j}}(r,z)dz\right) ^{1/s_{j}}drds \\
& \leq \int_{0}^{t}\int_{0}^{s}s^{\sigma _{j}+\beta _{j}-1}r^{\beta
_{j}\sigma _{i}}\left\Vert \varphi (r,\cdot )\right\Vert _{\infty }^{\beta
_{j}\beta _{i}-1}\left\Vert \varphi (r,\cdot )\right\Vert _{s_{j}}drds \\
& \leq c\left( \sup_{r\leq t}\left\Vert \varphi (r,\cdot )\right\Vert
_{\infty }\right) ^{\beta _{j}\beta _{i}-1}\left( \sup_{r\leq t}\left\Vert
\varphi (r,\cdot )\right\Vert _{s_{j}}\right) t^{\sigma _{j}+\beta
_{j}+\beta _{j}\sigma _{i}+1} \\
& \leq cR^{\beta _{j}\beta i}\tau ^{\sigma _{j}+\beta _{j}+\beta _{j}\sigma
_{i}+1}.
\end{align*}%
If we take $R$ large enough such that 
\begin{equation*}
\frac{R}{2}\geq \left\Vert \varphi _{j}\right\Vert _{\infty
}^{1-r_{j}/s_{j}}\left\Vert \varphi _{j}\right\Vert
_{r_{j}}^{r_{j}/s_{j}}+c\left\Vert \varphi _{j}\right\Vert _{\infty },
\end{equation*}%
and $\tau $ small enough such that 
\begin{gather*}
c\left\Vert \varphi _{j}\right\Vert _{\infty }^{\beta _{j}}\tau ^{\sigma
_{j}+1}+cR^{\beta _{j}\beta _{i}}\tau ^{\sigma _{j}+\beta _{j}+\beta
_{j}\sigma _{i}+1}\ \ \ \ \ \ \ \ \ \ \ \ \ \ \ \ \ \ \ \ \ \ \ \ \ \ \ \ \
\ \ \ \ \  \\
+c\tau ^{\sigma _{j}+1}\left\Vert \varphi _{i}\right\Vert
_{r_{i}}^{r_{i}/s_{j}}\left\Vert \varphi _{j}\right\Vert _{\infty }^{\beta
_{j}-r_{i}/s_{j}}+cR^{\beta _{j}\beta _{i}}\tau ^{\sigma _{j}+\beta
_{j}+\beta _{j}\sigma _{i}+1}\leq \frac{R}{2},
\end{gather*}%
then for each $\varphi \in B_{R}\cap P_{\tau }$, 
\begin{equation*}
|||F(\varphi )|||=\sup_{t\leq \tau }\left\Vert F(\varphi )(t,\cdot
)\right\Vert _{\infty }+\sup_{t\leq \tau }\left\Vert F(\varphi )(t,\cdot
)\right\Vert _{s_{j}}\leq R.
\end{equation*}%
Now let us see that $F$ is a contraction. Let $\varphi ,\psi \in B_{R}\cap P_{\tau }$,
\begin{eqnarray*}
|F(\varphi )(t,x)-F(\psi )(t,x)| &\leq &c\int_{0}^{t}\int_{\mathbb{R}%
^{d}}s^{\sigma _{j}+\beta _{j}-1}\int_{0}^{s}p_{a}(f_{j}(t,s)+f_{i}(s,r),z-x)
\\
&&\times r^{\beta _{j}\sigma _{i}}\left\vert \varphi (r,z)^{\beta _{j}\beta
_{i}}-\psi (r,z)^{\beta _{j}\beta _{i}}\right\vert drdzds.
\end{eqnarray*}%
Using the elementary inequality 
\begin{equation*}
|s^{p}-r^{p}|\leq p(\max \{s,r\})^{p-1}|s-r|,\ \ s,r>0,\ p\geq 1,
\end{equation*}%
we have 
\begin{eqnarray*}
\left\vert F(\varphi )(t,x)-F(\psi )(t,x)\right\vert &\leq
&c\int_{0}^{t}s^{\sigma _{j}+\beta _{j}-1}\int_{0}^{s}r^{\beta _{j}\sigma
_{i}}\beta _{j}\beta _{i}R^{\beta _{j}\beta _{i}-1}drds \\
&&\times |||\varphi -\psi ||| \\
&\leq &c\tau ^{\sigma _{j}+\beta _{j}+\beta _{j}\sigma _{i}+1}|||\varphi
-\psi |||.
\end{eqnarray*}%
Also we choose $\tau >0$ small enough such that%
\begin{equation*}
c\tau ^{\sigma _{j}+\beta _{j}+\beta _{j}\sigma _{i}+1}<1.
\end{equation*}%
From this we have that $F$ is a contraction, then the result follows from
Banach fix point theorem.\hfill
\end{proof}

\begin{lemma}
\label{pretp}Suppose that $\varphi _{i}\in L_{+}^{\infty }(\mathbb{R}%
^{d})\cap L_{+}^{r_{i}}(\mathbb{R}^{d})$, $r_{i}\geq 1$, for $i\in \{1,2\}$.
Then there exits a local solution $(u_{1},u_{2})$ of (\ref{ecintegrl}).
Moreover, there exits $\tilde{T}>0$ such that $u_{i}\in L^{\infty }([0,%
\tilde{T}],L_{+}^{\infty }(\mathbb{R}^{d})\cap L_{+}^{s_{i}}(\mathbb{R}%
^{d})) $, for any $s_{i}$ satisfying (\ref{prcond}) and 
\begin{gather}
s_{i}\geq r_{i},\ \ s_{j}\geq \beta _{i}\text{, \ }s_{i}\beta _{i}\geq s_{j},
\label{consobiplem} \\
\frac{\beta _{i}}{s_{j}}-\frac{1}{s_{i}}<\frac{\alpha _{i}}{d}.
\label{conexleax}
\end{gather}
\end{lemma}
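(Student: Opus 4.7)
The plan is to apply Banach's fixed point theorem directly to the coupled operator $\Phi=(\Phi_1,\Phi_2)$ defined by the right-hand side of (\ref{ecintegrl}), on the product of two copies of the Banach space introduced before Lemma~\ref{aeg} (with $s_j$ replaced by the appropriate $s_i$). I would work in the closed subset $B_R\cap P_\tau$ of the positive cone, and show that for $R$ large enough and $\tau>0$ small enough, $\Phi$ maps $B_R\cap P_\tau$ into itself and is a strict contraction; Banach's theorem then yields the unique mild solution $(u_1,u_2)$ with $u_i\in L^\infty([0,\tilde T];L_+^\infty(\mathbb{R}^d)\cap L_+^{s_i}(\mathbb{R}^d))$.

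For the self-mapping property, the $L^\infty$ bound of $\Phi_i(u)(t)$ is immediate since $p_i(t^{\rho_i},\cdot)$ is a probability density: $\|\Phi_i(u)(t)\|_\infty\leq \|\varphi_i\|_\infty+cR^{\beta_i}\tau^{\sigma_i+1}$, using $\sigma_i>-1$. For the $L^{s_i}$ bound of the linear part, I would reproduce the Jensen--Minkowski step from Lemma~\ref{aeg}, using $\varphi_i^{s_i}\leq\|\varphi_i\|_\infty^{s_i-r_i}\varphi_i^{r_i}$ (which requires $s_i\geq r_i$, ensured by (\ref{consobiplem})), to get $\|\varphi_i\|_\infty^{1-r_i/s_i}\|\varphi_i\|_{r_i}^{r_i/s_i}$. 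The nonlinear part is the delicate estimate: by Minkowski's integral inequality and Young's convolution inequality,
\begin{equation*}
\left\|\int_{\mathbb{R}^d} p_i(t^{\rho_i}-s^{\rho_i},y-\cdot)u_j^{\beta_i}(s,y)\,dy\right\|_{s_i}\leq \|p_i(t^{\rho_i}-s^{\rho_i},\cdot)\|_q \,\|u_j(s)\|_{s_j}^{\beta_i},
\end{equation*}
with $1/q=1+1/s_i-\beta_i/s_j$. The conditions $s_j\geq\beta_i$ and $s_i\beta_i\geq s_j$ from (\ref{consobiplem}) guarantee $s_j/\beta_i\geq 1$ and $q\geq 1$, so Young's inequality applies. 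Replicating the proof of Lemma~\ref{estmude} with $p_i$ in place of $p_a$ yields $\|p_i(\eta,\cdot)\|_q=c\eta^{-\gamma_i}$ with $\gamma_i=\frac{d}{\alpha_i}\bigl(\frac{\beta_i}{s_j}-\frac{1}{s_i}\bigr)$, and (\ref{conexleax}) is exactly $\gamma_i<1$. A Beta-function substitution $u=s^{\rho_i}$ in $\int_0^t s^{\sigma_i}(t^{\rho_i}-s^{\rho_i})^{-\gamma_i}\,ds$ gives convergence (using $\sigma_i>-1$) and the value $c\,t^{\sigma_i+1-\rho_i\gamma_i}$, yielding the bound $cR^{\beta_i}\tau^{\sigma_i+1-\rho_i\gamma_i}$. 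Choosing $R$ to dominate the initial-data contributions and then $\tau$ small makes $\Phi(B_R\cap P_\tau)\subset B_R\cap P_\tau$; positivity is preserved because $p_i\geq 0$ and $\varphi_i\geq 0$.

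For the contraction property, subtracting shows that $|\Phi_i(u)-\Phi_i(v)|$ depends only on $|u_j^{\beta_i}-v_j^{\beta_i}|$, so the elementary bound $|a^{\beta_i}-b^{\beta_i}|\leq\beta_i(\max\{a,b\})^{\beta_i-1}|a-b|$ combined with the same Minkowski--Young pipeline produces $|||\Phi(u)-\Phi(v)|||\leq cR^{\beta_i-1}\tau^{\sigma_i+1-\rho_i\gamma_i}|||u-v|||$. Shrinking $\tau$ further drives the coefficient below $1$, and Banach's fixed point theorem delivers the desired local solution on $[0,\tilde T]$.

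The main obstacle is the nonlinear $L^{s_i}$ estimate: the hypotheses $s_j\geq\beta_i$ and $s_i\beta_i\geq s_j$ needed to apply Young's inequality, together with the condition (\ref{conexleax}) needed for the time singularity $(t^{\rho_i}-s^{\rho_i})^{-\gamma_i}$ to be integrable against $s^{\sigma_i}$, are used precisely at this step and nowhere else; tracking these conditions carefully through the Young--Beta computation is the only subtle point.
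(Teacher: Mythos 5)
Your overall strategy---one Banach fixed-point argument for the coupled operator $(\Phi_1,\Phi_2)$ on the product space with norms $\sup_{t\le\tau}(\|\cdot\|_\infty+\|\cdot\|_{s_i})$---is genuinely different from the paper's. The paper first produces the local solution by a separate contraction, and then obtains the $L^{s_j}$ bound \emph{a posteriori}: it iterates (\ref{ecintegrl}) once, uses Jensen's inequality and the semigroup identity (\ref{smgprop}) to collapse the composed kernels into a single probability kernel, compares the resulting closed inequality for $u_j$ with the auxiliary equation of Lemma \ref{aeg} via Lemma \ref{lemacompa} (Remark \ref{remlema}), and only afterwards estimates $\|u_i(t)\|_{s_i}$ by the Minkowski--Young computation you describe. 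Your route would be shorter and would bypass the comparison machinery entirely --- if it closed.

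The gap is in the nonlinear $L^{s_i}$ estimate. Your Young-inequality step produces the singular factor $(t^{\rho_i}-s^{\rho_i})^{-\gamma_i}$ with $\gamma_i=\frac{d}{\alpha_i}\bigl(\frac{\beta_i}{s_j}-\frac{1}{s_i}\bigr)$, and the time integral evaluates to $c\,t^{\sigma_i+1-\rho_i\gamma_i}$. You then replace $t$ by $\tau$ and shrink $\tau$, which is legitimate only when $\sigma_i+1-\rho_i\gamma_i>0$. The hypotheses of the lemma give only $\gamma_i<1$ (this is (\ref{conexleax})) and $\sigma_i>-1$; nothing prevents $\rho_i\gamma_i\ge\sigma_i+1$ (take $\rho_i$ large, $\sigma_i=0$, $\gamma_i=1/2$), in which case your bound for $\sup_{t\le\tau}\|\Phi_i(u)(t)\|_{s_i}$ is infinite and neither the invariance of $B_R\cap P_\tau$ nor the contraction estimate follows. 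This is precisely what the paper's detour avoids: its contraction (Lemma \ref{aeg}) is run on the once-iterated equation, where the kernel $p_a(f_j(t,s)+f_i(s,r),\cdot)$ has total mass one, every $L^{s_j}$ estimate goes through Jensen with no singular time factor, and the resulting powers $\tau^{\sigma_j+1}$ and $\tau^{\sigma_j+\beta_j+\beta_j\sigma_i+1}$ are automatically positive; the Young-type singular estimate appears only where finiteness at each fixed $t$ (i.e.\ $\gamma_i<1$, $\sigma_i>-1$) is all that is needed. Your argument can be repaired without iterating, by replacing Young with Jensen in the nonlinear term: since $p_i(t^{\rho_i}-s^{\rho_i},\cdot)$ is a probability density, $\bigl\|\int p_i(t^{\rho_i}-s^{\rho_i},y-\cdot)u_j^{\beta_i}(s,y)dy\bigr\|_{s_i}\le\|u_j^{\beta_i}(s)\|_{s_i}\le\|u_j(s)\|_\infty^{\beta_i-s_j/s_i}\|u_j(s)\|_{s_j}^{s_j/s_i}\le R^{\beta_i}$ (using $s_i\beta_i\ge s_j$ from (\ref{consobiplem})), which yields the harmless factor $\tau^{\sigma_i+1}$. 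As written, however, the proof does not establish the lemma for all $s_i$ allowed by (\ref{prcond}), (\ref{consobiplem}) and (\ref{conexleax}).
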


\begin{proof}
Proceeding as in Lemma \ref{aeg} we can find a real number $T>0$ such that $%
(u_{1},u_{2})$ is a solution of (\ref{ecintegrl}) in $[0,T]\times \mathbb{R}%
^{d}$ (see, for example, Theorem 3 in \cite{villa}). From (\ref{ecintegrl})
and (\ref{eq:jvest1})%
\begin{eqnarray}
u_{j}(t,x) &\leq &c\int_{\mathbb{R}^{d}}p_{a}(f_{j}(t),y-x)\varphi _{j}(y)dy
\notag \\
&&+c\int_{0}^{t}\int_{\mathbb{R}^{d}}p_{a}(f_{j}(t,s),y-x)s^{\sigma
_{j}}u_{i}^{\beta _{j}}(s,y)dyds.  \label{eq:eaeg}
\end{eqnarray}%
Using the elementary inequality%
\begin{equation*}
(s+r)^{q}\leq 2^{q-1}(s^{q}+r^{q}),\ \ q\geq 1,\ s,r\geq 0,
\end{equation*}%
in the previous estimation we have%
\begin{eqnarray}
u_{j}(t,x) &\leq &c\int_{\mathbb{R}^{d}}p_{a}(f_{j}(t),y-x)\varphi _{j}(y)dy
\notag \\
&&+c\int_{0}^{t}\int_{\mathbb{R}^{d}}p_{a}(f_{j}(t,s),y-x)s^{\sigma _{j}} 
\notag \\
&&\times \left( \int_{\mathbb{R}^{d}}cp_{a}(f_{i}(s),z-y)\varphi
_{i}(z)dz\right) ^{\beta _{j}}dyds  \notag \\
&&+c\int_{0}^{t}\int_{\mathbb{R}^{d}}p_{a}(f_{j}(t,s),y-x)s^{\sigma _{j}} 
\notag \\
&&\times \left( \int_{0}^{s}\int_{\mathbb{R}^{d}}cp_{a}(f_{i}(s,r),z-y)r^{%
\sigma _{i}}u_{j}^{\beta _{i}}(r,z)dzdr\right) ^{\beta _{j}}dyds.
\label{uaxtq}
\end{eqnarray}%
By Jensen inequality we obtain 
\begin{equation*}
\left( \int_{\mathbb{R}^{d}}cp_{a}(f_{i}(s),z-y)\varphi _{i}(z)dz\right)
^{\beta _{j}}\leq c^{\beta _{j}}\int_{\mathbb{R}^{d}}\varphi _{i}^{\beta
_{j}}(z)p_{a}(f_{i}(s),z-y)dz,
\end{equation*}%
and using again Jensen inequality%
\begin{align*}
& \left( \int_{0}^{s}\int_{\mathbb{R}^{d}}cp_{a}(f_{i}(s,r),z-y)r^{\sigma
_{i}}u_{j}^{\beta _{i}}(r,z)dzdr\right) ^{\beta _{j}} \\
& \leq c^{\beta _{j}}\left( \int_{0}^{s}\int_{\mathbb{R}%
^{d}}p_{a}(f_{i}(s,r),z-y)dzdr\right) ^{\beta _{j}-1} \\
& \times \int_{0}^{s}\int_{\mathbb{R}^{d}}p_{a}(f_{i}(s,r),z-y)r^{\beta
_{j}\sigma _{i}}u_{j}^{\beta _{j}\beta _{i}}(r,z)dzdr.
\end{align*}%
Taken into account this inequalities we deduce from (\ref{uaxtq}) 
\begin{eqnarray*}
u_{j}(t,x) &<&c\int_{\mathbb{R}^{d}}p_{a}(f_{j}(t),y-x)\varphi _{j}(y)dy
\\
&&+c\int_{0}^{t}\int_{\mathbb{R}^{d}}p_{a}\left( f_{j}(t,s),y-x\right)
s^{\sigma _{j}}\int_{\mathbb{R}^{d}}\varphi _{i}^{\beta
_{j}}(z)p_{a}(f_{i}(s),z-y)dzdyds \\
&&+c\int_{0}^{t}\int_{\mathbb{R}^{d}}p_{a}(f_{j}(t,s),y-x)s^{\sigma
_{j}+\beta _{j}-1} \\
&&\times \int_{0}^{s}\int_{\mathbb{R}^{d}}p_{a}\left( f_{i}(s,r),z-y\right)
r^{\beta _{j}\sigma _{i}}u_{j}^{\beta _{j}\beta _{i}}(r,z)dzdrdyds.
\end{eqnarray*}%
It is easy to see that the semigroup property (\ref{smgprop}) yields%
\begin{eqnarray*}
u_{j}(t,x) &<&c\int_{\mathbb{R}^{d}}p_{a}(f_{j}(t),y-x)\varphi _{j}(y)dy
\\
&&+c\int_{0}^{t}\int_{\mathbb{R}^{d}}s^{\sigma
_{j}}p_{a}(f_{j}(t,s)+f_{i}(s),z-x)\varphi _{i}^{\beta _{j}}(z)dzds \\
&&+c\int_{0}^{t}\int_{\mathbb{R}^{d}}s^{\sigma _{j}+\beta
_{j}-1}\int_{0}^{s}p_{a}(f_{j}(t,s)+f_{i}(s,r),z-x) \\
&&\times r^{\beta _{j}\sigma _{i}}u_{j}^{\beta _{j}\beta _{i}}(r,z)drdzds.
\end{eqnarray*}%
By comparison Lemma \ref{lemacompa} (see Remark \ref{remlema}) we have%
\begin{equation*}
u_{j}(t,x)\leq v_{j}(t,x),\ \ \forall (t,x)\in (0,\infty )\times \mathbb{R}%
^{d},
\end{equation*}%
and Lemma \ref{aeg} implies $u_{j}\in L^{\infty }([0,\tau ],L_{+}^{\infty }(%
\mathbb{R}^{d})\cap L_{+}^{s_{j}}(\mathbb{R}^{d}))$. From (\ref{ecintegrl})
and Min\-kows\-ki inequality (see Theorem 16.17 in \cite{Y}) one has
\begin{eqnarray}
\left\Vert u_{i}(t)\right\Vert _{s_{i}} &\leq &c\left\Vert \int_{\mathbb{R}%
^{d}}p_{a}\left( f_{i}(t),y-\cdot \right) \varphi _{i}(y)dy\right\Vert
_{s_{i}}  \notag \\
&&+c\left\Vert \int_{0}^{t}\int_{\mathbb{R}^{d}}p_{a}(f_{i}(t,s),y-\cdot
)s^{\sigma _{i}}u_{j}^{\beta _{i}}\left( s,y\right) dyds\right\Vert _{s_{i}}
\notag \\
&:=&cJ_{1}+cJ_{2}.  \label{auxtped}
\end{eqnarray}%
Let us estimate $J_{1}$ first. Since $s_{i}\geq r_{i}$, the Young inequality and Lemma \ref{estmude} implies%
\begin{equation*}
J_{1}\leq \left\Vert \varphi _{i}\right\Vert _{r_{i}}\left\Vert p_{a}\left(
f_{i}(t),\cdot \right) \right\Vert _{\left( 1+\frac{1}{s_{i}}-\frac{1}{r_{i}}%
\right) ^{-1}}=c\left\Vert \varphi _{i}\right\Vert _{r_{i}}t^{-\frac{d\rho
_{i}}{\alpha _{i}}\left( \frac{1}{r_{i}}-\frac{1}{s_{i}}\right) }.
\end{equation*}%
Now using that $s_{j}\geq \beta _{i}$, $s_{i}\beta _{i}\geq s_{j}$, the
Minkowski integral inequality, Young inequality and Lemma \ref%
{estmude} we get%
\begin{eqnarray*}
J_{2} &\leq &\int_{0}^{t}\left\Vert \int_{\mathbb{R}^{d}}p_{a}(f_{i}(t,s),y-%
\cdot )u_{j}^{\beta _{i}}(s,y)dy\right\Vert _{s_{i}}s^{\sigma _{i}}ds \\
&\leq &c\int_{0}^{t}\left( t^{\rho _{i}}-s^{\rho _{i}}\right) ^{-\frac{d}{%
\alpha _{i}}\left( \frac{\beta _{i}}{s_{j}}-\frac{1}{s_{i}}\right)
}\left\Vert u_{j}(s)\right\Vert _{s_{j}}^{\beta _{i}}s^{\sigma _{i}}ds.
\end{eqnarray*}%
A change of variable in the above integral allow us to write%
\begin{eqnarray*}
J_{2} &\leq &\frac{c}{\rho _{i}}t^{-\frac{d\rho _{i}}{\alpha _{i}}\left( 
\frac{\beta _{i}}{s_{j}}-\frac{1}{s_{i}}\right) +\sigma _{i}+1}\left(
\sup_{s\leq t}\left\Vert u_{j}(s)\right\Vert _{s_{j}}\right) ^{\beta _{i}} \\
&&\times \int_{0}^{1}\left( 1-s\right) ^{-\frac{d}{\alpha _{i}}\left( \frac{%
\beta _{i}}{s_{j}}-\frac{1}{s_{i}}\right) }s^{\frac{\sigma _{i}}{\rho _{i}}+%
\frac{1}{\rho _{i}}-1}ds.
\end{eqnarray*}%
Putting this together we have that condition (\ref{conexleax}) implies that $%
\left\Vert u_{i}(t)\right\Vert _{s_{i}}$ is bounded for $t<\min \{\tau ,T\}$%
.\hfill 
\end{proof}

\section{Proof of results}

\begin{proof}[Proof of Theorem \protect\ref{TeoPr}]
The first steep will be to see that it is possible to choice $r_{i},$ $s_{i},
$ $r_{j}$ and $s_{j}$ such that the conditions in Lemma \ref{pretp} are satisfied. From (\ref{auxtped}) we obtain%
\begin{equation}
\left\Vert u_{i}(t)\right\Vert _{s_{i}}\leq c\left\Vert \varphi
_{i}\right\Vert _{r_{i}}t^{-\xi _{i}}+c\int_{0}^{t}s^{\sigma _{i}}(t^{\rho
_{i}}-s^{\rho _{i}})^{-\delta _{i}}\left\Vert u_{j}(s)\right\Vert
_{s_{j}}^{\beta _{i}}ds,  \label{eq:epuex}
\end{equation}%
where 
\begin{equation}
\xi _{i}=\frac{d\rho _{i}}{\alpha _{i}}\left( \frac{1}{r_{i}}-\frac{1}{s_{i}}%
\right) \text{ \ and \ }\delta _{i}=\frac{d}{\alpha _{i}}\left( \frac{\beta
_{i}}{s_{j}}-\frac{1}{s_{i}}\right) .  \label{eq:defjide}
\end{equation}%
Iterating the inequality (\ref{eq:epuex})%
\begin{eqnarray*}
\left\Vert u_{i}(t)\right\Vert _{s_{i}} &\leq &c\left\Vert \varphi
_{i}\right\Vert _{r_{i}}t^{-\xi _{i}}+c\left\Vert \varphi _{j}\right\Vert
_{r_{j}}^{\beta _{i}}\int_{0}^{t}s^{\sigma _{i}-\beta _{i}\xi _{j}}(t^{\rho
_{i}}-s^{\rho _{i}})^{-\delta _{i}}ds \\
&&+c\int_{0}^{t}s^{\sigma _{i}}(t^{\rho _{i}}-s^{\rho _{i}})^{-\delta
_{i}}\left( \int_{0}^{s}r^{\sigma _{j}}(s^{\rho _{j}}-r^{\rho
_{j}})^{-\delta _{j}}\left\Vert u_{i}(r)\right\Vert _{s_{i}}^{\beta
_{j}}dr\right) ^{\beta _{i}}ds.
\end{eqnarray*}%
Let $w_{i}(t)=t^{\xi _{i}}\left\Vert u_{i}(t)\right\Vert _{s_{i}}$, then%
\begin{eqnarray}
w_{i}(t) &\leq &c\left\Vert \varphi _{i}\right\Vert _{r_{i}}+c\left\Vert
\varphi _{j}\right\Vert _{r_{j}}^{\beta _{i}}t^{\xi
_{i}}\int_{0}^{t}s^{\sigma _{i}-\beta _{i}\xi _{j}}(t^{\rho _{i}}-s^{\rho
_{i}})^{-\delta _{i}}ds  \notag \\
&&+ct^{\xi _{i}}\int_{0}^{t}s^{\sigma _{i}}(t^{\rho _{i}}-s^{\rho
_{i}})^{-\delta _{i}}\left( \int_{0}^{s}r^{\sigma _{j}-\beta _{j}\xi
_{i}}(s^{\rho _{j}}-r^{\rho _{j}})^{-\delta _{j}}dr\right) ^{\beta _{i}}ds 
\notag \\
&&\times \left( \sup_{r\leq t}w_{i}(r)\right) ^{\beta _{j}\beta _{i}}.
\label{spatercpa}
\end{eqnarray}%
Making some change of variables we obtain%
\begin{eqnarray}
\int_{0}^{t}s^{\sigma _{i}-\beta _{i}\xi _{j}}(t^{\rho _{i}}-s^{\rho
_{i}})^{-\delta _{i}}ds&=&\frac{1}{\rho _{i}}t^{\sigma _{i}-\beta _{i}\xi
_{j}-\delta _{i}\rho _{i}+1} \notag \\
&&\times \int_{0}^{1}s^{\frac{\sigma
_{i}-\beta _{i}\xi _{j}}{\rho _{i}}+\frac{1}{\rho _{i}}-1}(1-s)^{-\delta _{i}}ds.
\label{pitpb}
\end{eqnarray}%
This integral is convergent if 
\begin{equation}
\delta _{i}<1,\ \ \sigma _{i}-\beta _{i}\xi _{j}+1>0.  \label{eq:ecua5}
\end{equation}%
In the same way, the second integral in third term in the right hand side of
(\ref{spatercpa}) is finite if 
\begin{equation}
\delta _{j}<1,\ \ \sigma _{j}-\beta _{j}\xi _{i}+1>0.  \label{ecaua7}
\end{equation}%
Whence%
\begin{eqnarray*}
w_{i}(t) &\leq &c\left\Vert \varphi _{i}\right\Vert _{r_{i}}+c\left\Vert
\varphi _{j}\right\Vert _{r_{j}}^{\beta _{i}}t^{\xi _{i}+\sigma _{i}-\beta
_{i}\xi _{j}-\delta _{i}\rho _{i}+1} \\
&&+ct^{\xi _{i}}\int_{0}^{t}s^{\sigma _{i}+(\sigma _{j}-\beta _{j}\xi
_{i}-\delta _{j}\rho _{j}+1)\beta _{i}}(t^{\rho _{i}}-s^{\rho
_{i}})^{-\delta _{i}}ds\left( \sup_{r\leq t}w_{i}(r)\right) ^{\beta
_{j}\beta _{i}},
\end{eqnarray*}%
is well defined if 
\begin{equation}
\sigma _{i}+(\sigma _{j}-\beta _{j}\xi _{i}-\delta _{j}\rho _{j}+1)\beta
_{i}+1>0.  \label{eq:ecua6}
\end{equation}%
This can be written as%
\begin{equation*}
w_{i}(t)\leq c\left\Vert \varphi _{i}\right\Vert _{r_{i}}+c\left\Vert
\varphi _{j}\right\Vert _{r_{j}}^{\beta _{i}}t^{\eta _{i}}+ct^{\theta
_{i}}\left( \sup_{r\leq t}w_{i}(r)\right) ^{\beta _{j}\beta _{i}},
\end{equation*}%
where 
\begin{align}
\eta _{i}& =\xi _{i}+\sigma _{i}-\beta _{i}\xi _{j}-\delta _{i}\rho _{i}+1,
\label{eq:C1} \\
\theta _{i}& =\sigma _{i}+\left[ \sigma _{j}-\beta _{j}\xi _{i}-\delta
_{j}\rho _{j}+1\right] \beta _{i}-\delta _{i}\rho _{i}+\xi _{i}+1.
\label{eq:C2}
\end{align}%
Let us take%
\begin{equation*}
\xi _{i}=(1-\Delta )x_{i}\text{ \ and \ }\xi _{j}=(1-\Delta )x_{j},
\end{equation*}%
for a convenient choice of $\Delta >0$.

Assume that%
\begin{equation}
\rho _{i}\delta _{i}-\sigma _{i}=\Delta =\rho _{j}\delta _{j}-\sigma _{j}.
\label{eq:acde}
\end{equation}%
If%
\begin{equation}
\theta _{i}=0,  \label{C1}
\end{equation}%
then (\ref{eq:C2}) implies%
\begin{equation*}
1+\beta _{i}(1-\beta _{j})x_{i}+x_{i}=0,
\end{equation*}%
whence%
\begin{equation*}
x_{i}=\frac{1+\beta _{i}}{\beta _{i}\beta _{j}-1}.
\end{equation*}%
Also we want 
\begin{equation}
\eta _{i}=0,  \label{C2}
\end{equation}%
then (\ref{eq:C1}) implies%
\begin{equation*}
x_{j}=\frac{1+\beta _{j}}{\beta _{i}\beta _{j}-1}.
\end{equation*}%
The assumption (\ref{eq:acde}) and definitions in (\ref{eq:defjide}) yields%
\begin{align*}
\frac{\beta _{i}}{s_{j}}-\frac{1}{s_{i}}& =\frac{\alpha _{i}}{d\rho _{i}}%
(\Delta +\sigma _{i}), \\
\frac{\beta _{j}}{s_{i}}-\frac{1}{s_{j}}& =\frac{\alpha _{j}}{d\rho _{j}}%
(\Delta +\sigma _{j}).
\end{align*}%
Solving such linear system of equations we find $s_{i}$ and $s_{j}$ given in
(\ref{defs2}). On the other hand, using (\ref{eq:defjide}) we get $r_{i}$
and $r_{j}$ given in (\ref{defr2}). The conditions (\ref{eq:ecua5}), (\ref%
{ecaua7}) and (\ref{eq:ecua6}) are satisfies if%
\begin{equation*}
\max \left\{ \tilde{x_{i}},\tilde{x_{j}}\right\} <\Delta <\min \left\{ 1,%
\tilde{\rho}_{i},\tilde{\rho}_{j},\tilde{k}_{i}\right\} ,
\end{equation*}%
where $\tilde{x}_{i},$ $\tilde{\rho}_{i}$ and $\tilde{k}_{i}$ are defined in
(\ref{defxyro}) and (\ref{defktilde}). For this election of $r_{i},$ $s_{i},$
$r_{j}$ and $s_{j}$ the conditions (\ref{prcond}), (\ref{consobiplem}) and (%
\ref{conexleax}) are also satisfied, then by Lemma \ref{pretp} we have a
local solution $(u_{1},u_{2})$ on $[0,\tilde{T}]\times \mathbb{R}^{d}$. If
we change the r\^{o}les of $i$ and $j$ in the above procedure we have%
\begin{equation*}
\max \left\{ \tilde{x_{i}},\tilde{x_{j}}\right\} <\Delta <\min \left\{ 1,%
\tilde{\rho}_{j},\tilde{\rho}_{i},\tilde{k}_{j}\right\} .
\end{equation*}%
This implies that the correct condition on $\Delta $ is%
\begin{equation*}
\max \left\{ \tilde{x_{i}},\tilde{x_{j}}\right\} <\Delta <\min \left\{ 1,%
\tilde{\rho}_{i},\tilde{\rho}_{j},\max \{\tilde{k}_{i},\tilde{k}%
_{j}\}\right\} .
\end{equation*}%
As a second steep we are going to deal with the $L^{s_{i}}(\mathbb{R}^{d})$
boundedness of $u_{i}$. Conditions (\ref{C1}) and (\ref{C2}) implies%
\begin{equation}
z_{i}(t)\leq c\left( \left\Vert \varphi _{i}\right\Vert _{r_{i}}+\left\Vert
\varphi _{j}\right\Vert _{r_{j}}^{\beta _{i}}\right) +cz_{i}^{\beta
_{i}\beta _{j}}(t),  \label{eq:ecfl}
\end{equation}%
where%
\begin{equation*}
z_{i}(t)=\sup_{r\leq t}w_{i}(r).
\end{equation*}%
If we take $\varphi _{i},\varphi _{j}$ small enough such that 
\begin{equation}
\left\Vert \varphi _{i}\right\Vert _{r_{i}}+\left\Vert \varphi
_{j}\right\Vert _{r_{j}}^{\beta _{i}}<(2c)^{\frac{\beta _{i}\beta _{j}}{%
1-\beta _{i}\beta _{j}}},  \label{eq:csmall}
\end{equation}%
then 
\begin{equation}
z_{i}(t)\leq 2c(\left\Vert \varphi _{i}\right\Vert _{r_{i}}+\left\Vert
\varphi _{j}\right\Vert _{r_{j}}^{\beta _{i}}),\;\;\forall t\geq 0.
\label{eq:acps}
\end{equation}%
In fact, if (\ref{eq:acps}) were false, since $z_{i}$ is continuous, the
intermediate value theorem would imply that there exists $t_{0}>0$ such that 
\begin{equation*}
z_{i}(t_{0})=2c(\left\Vert \varphi _{i}\right\Vert _{r_{i}}+\left\Vert
\varphi _{j}\right\Vert _{r_{j}}^{\beta _{i}}),
\end{equation*}%
then of (\ref{eq:ecfl}) we could conclude 
\begin{equation*}
\left\Vert \varphi _{i}\right\Vert _{r_{i}}+\left\Vert \varphi
_{j}\right\Vert _{r_{j}}^{\beta _{i}}\geq (2c)^{\frac{\beta _{i}\beta _{j}}{%
1-\beta _{i}\beta _{j}}},
\end{equation*}%
this is a contradiction with (\ref{eq:csmall}). The second part of the
statement follows from the definitions of $z_{i}$ and $w_{i}$ together with (%
\ref{eq:acps}).\hfill 
\end{proof}

\bigskip

\begin{proof}[Proof of\ Corollary \protect\ref{SegRe}]
Let us consider (\ref{eq:eaeg}), then%
\begin{eqnarray}
||u_{i}(t)||_{\infty } &\leq &c\left\Vert \int_{\mathbb{R}%
^{d}}p_{a}(f_{i}(t),y-\cdot )\varphi _{i}(y)dy\right\Vert _{\infty }  \notag
\\
&&+c\left\Vert \int_{0}^{t}\int_{\mathbb{R}^{d}}p_{a}(f_{i}(t,s),y-\cdot
)s^{\sigma _{i}}u_{j}^{\beta _{i}}(s,y)dyds\right\Vert _{\infty }.
\label{eess}
\end{eqnarray}%
Since $p_{a}(f_{i}(t),\cdot )$ is a density 
\begin{equation*}
\left\Vert \int_{\mathbb{R}^{d}}p_{a}(f_{i}(t),y-\cdot )\varphi
_{i}(y)dy\right\Vert _{\infty }\leq ||\varphi _{i}||_{\infty }\left\Vert
\int_{\mathbb{R}^{d}}p_{a}(f_{i}(t),y-\cdot )dy\right\Vert _{\infty
}=||\varphi _{i}||_{\infty }.
\end{equation*}%
To estimate the second term in (\ref{eess}) we use H\"{o}lder inequality (see Theorem 16.14 in \cite{Y}) and
Lemma \ref{estmude} 
\begin{eqnarray*}
\int_{\mathbb{R}^{d}}p_{a}(f_{i}(t,s),y-\cdot )s^{\sigma _{i}}u_{j}^{\beta
_{i}}(s,y)dyds &=&||p_{a}(f_{i}(t,s),\cdot -x)u_{j}^{\beta _{i}}(s,\cdot
)||_{1} \\
&\leq &||p_{a}(f_{i}(t,s),\cdot -x)||_{\left( 1-\frac{\beta _{i}}{s_{j}}%
\right) ^{-1}} \\
&&\times ||u_{j}^{\beta _{i}}(s,\cdot )||_{\frac{s_{j}}{\beta _{i}}} \\
&=&c(t^{\rho _{i}}-s^{\rho _{i}})^{-\frac{d\beta _{i}}{\alpha _{i}s_{j}}%
}||u_{j}(s,\cdot )||_{s_{j}}^{\beta _{i}}.
\end{eqnarray*}%
From (\ref{eess}) one has%
\begin{equation*}
||u_{i}(t)||_{\infty }\leq c||\varphi _{i}||_{\infty }+c\int_{0}^{t}(t^{\rho
_{i}}-s^{\rho _{i}})^{-\frac{d\beta _{i}}{\alpha _{i}s_{j}}}||u_{j}(s,\cdot
)||_{s_{j}}^{\beta _{i}}s^{\sigma _{i}}ds,
\end{equation*}%
and (\ref{estdslenls}) implies%
\begin{eqnarray*}
||u_{i}(t)||_{\infty } &\leq &c||\varphi _{i}||_{\infty
}+c\int_{0}^{t}(t^{\rho _{i}}-s^{\rho _{i}})^{-\frac{d\beta _{i}}{\alpha
_{i}s_{j}}}s^{\sigma _{i}-\beta _{i}\xi _{j}}ds \\
&=&c||\varphi _{i}||_{\infty }+ct^{\sigma _{i}-\beta _{i}\xi _{j}-\frac{\rho
_{i}d\beta _{i}}{\alpha _{i}s_{j}}+1}\int_{0}^{1}(1-s)^{-\frac{d\beta _{i}}{%
\alpha _{i}s_{j}}}s^{\frac{1}{\rho _{i}}(\sigma _{i}-\beta _{i}\xi _{j})+%
\frac{1}{\rho _{i}}-1}ds.
\end{eqnarray*}%
Proceeding as in (\ref{pitpb}) we deduce that the above integral is fine if (%
\ref{eq:ecua5}) holds and%
\begin{equation*}
\frac{d\beta _{i}}{\alpha _{i}}<s_{j}.
\end{equation*}%
The definition (\ref{defs2}) of $s_{j}$\ impose the condition%
\begin{equation}
\Delta <\frac{(\frac{\alpha _{i}}{d})[d\rho _{i}\rho _{j}(\beta _{i}\beta
_{j}-1)]-(\alpha _{j}\rho _{i}\sigma _{j}+\alpha _{i}\beta _{j}\rho
_{j}\sigma _{i})\beta _{i}}{\beta _{i}(\alpha _{j}\rho _{i}+\alpha _{i}\beta
_{j}\rho _{j})}:=\hat{k}_{i}.  \label{expauxpkp}
\end{equation}%
Whence%
\begin{equation*}
\Delta <\min \left\{ 1,\tilde{\rho}_{j},\tilde{\rho}_{i},\tilde{k}_{i},\hat{k%
}_{i}\right\} .
\end{equation*}%
Changing the r\^{o}les of $i$ and $j$ we have the condition (\ref{cess}%
).\hfill
\end{proof}

\bigskip

\begin{proof}[Proof of Theorem \protect\ref{exiconicon}]
Here we consider the equation (\ref{eq:eaeg}) with%
\begin{equation*}
\alpha _{i}=\alpha _{j}=\alpha \ \ \text{and}\ \ \rho _{i}=\rho _{j}=\rho ,
\end{equation*}%
and we write $p\left( t,x\right) $ instead of $p_{a}\left( t,x\right) $.
Under this considerations we study the solution of
\begin{equation}
u_{i}(t,x)\leq \int_{\mathbb{R}^{d}}cp\left( t^{\rho },y-x\right) \varphi
_{i}(y)dy+\int_{0}^{t}\int_{\mathbb{R}^{d}}cp\left( t^{\rho }-s^{\rho
},y-x\right) s^{\sigma _{i}}u_{j}^{\beta _{i}}(s,y)dyds,  \label{ecpatepa}
\end{equation}%
with the  initial condition%
\begin{equation*}
\varphi _{i}(x)=\varepsilon p(1,x),\ \ x\in \mathbb{R}^{d}.
\end{equation*}%
Define the functions $g,h:(0,\infty )\rightarrow (0,\infty )$ as 
\begin{equation*}
g(t)=\left( e^{t}-1\right) ^{\rho },\ \ h(t)=\left[ g(t)+1\right] ^{1/\alpha
}.
\end{equation*}%
Through a change of variable the inequality (\ref{ecpatepa}) can be
transformed into%
\begin{align*}
u_{i}(g^{1/\rho }(t),h(t)x)& \leq c\varepsilon \int_{\mathbb{R}%
^{d}}p\left( g(t),y-h(t)x\right) p(1,y)dy \\
& +c\int_{0}^{e^{t}-1}\int_{\mathbb{R}^{d}}p\left( g(t)-s^{\rho
},y-h(t)x\right) s^{\sigma _{i}}u_{j}^{\beta _{i}}(s,y)dyds \\
& =c\varepsilon p\left( g(t)+1,h(t)x\right)  \\
& +c\int_{0}^{t}\int_{\mathbb{R}^{d}}p\left( g(t)-g(s),y-h(t)x\right)
g^{\sigma _{i}/\rho }(s) \\
& \times u_{j}^{\beta _{i}}(g^{1/\rho }(s),y)dye^{s}ds \\
& =c\varepsilon (h(t))^{-d}p(1,x) \\
& +c\int_{0}^{t}\int_{\mathbb{R}^{d}}p\left( g(t)-g(s),h(s)y-h(t)x\right)
g^{\sigma _{i}/\rho }(s) \\
& \times u_{j}^{\beta _{i}}(g^{1/\rho }(s),h(s)y)h^{d}(s)dye^{s}ds.
\end{align*}%
Setting%
\begin{equation*}
\bar{u}_{i}(t,x)=u_{i}(g^{1/\rho }(t),h(t)x),\ \ t\geq 0,\;x\in \mathbb{R}%
^{d},
\end{equation*}%
we have%
\begin{eqnarray*}
\bar{u}_{i}(t,x) &\leq &c\varepsilon (h(t))^{-d}p(1,x) \\
&&+c\int_{0}^{t}g^{\sigma _{i}/\rho }(s)e^{s}h^{d}(s) \\
&&\times \int_{\mathbb{R}^{d}}p\left( g(t)-g(s),h(s)y-h(t)x\right) \bar{u}%
_{j}^{\beta _{i}}(s,y)dyds.
\end{eqnarray*}%
Also define%
\begin{equation*}
w_{i}(t,x)=c\varepsilon e^{-(\theta _{i}+\eta _{i})t}p(1,x),\ \ t\geq
0,\;\;x\in \mathbb{R}^{d},
\end{equation*}%
where%
\begin{equation*}
\theta _{i}=\frac{1+\sigma _{i}+\beta _{i}(1+\sigma _{j})}{\beta _{i}\beta
_{j}-1},
\end{equation*}%
and $\eta _{i}$ is a positive number to be fixed. Observe that%
\begin{eqnarray*}
A_{i}(t,x) &=&\int_{0}^{t}g^{\sigma _{i}/\rho }(s)e^{s}h(t)^{d} \\
&&\times \int_{\mathbb{R}^{d}}p\left( g(t)-g(s),h(s)y-h(t)x\right)
w_{j}^{\beta _{i}}(s,y)dyds \\
&\leq &(c\varepsilon )^{\beta _{i}}\int_{0}^{t}g^{\sigma _{i}/\rho
}(s)e^{s}h^{d}(t) \\
&&\times \int_{\mathbb{R}^{d}}p\left( g(t)-g(s),h(s)y-h(t)x\right)  \\
&&\times e^{-(\theta _{j}+\eta _{j})s\beta _{i}}p(1,y)dyds\left( \sup_{z\in 
\mathbb{R}^{d}}p(1,z)\right) ^{\beta _{i}-1}.
\end{eqnarray*}%
Using property (1) in Lemma \ref{pdd} and the unimodality of $p(1,\cdot )$ ($%
p(1,x)\leq p(1,0)$, for each $x\in \mathbb{R}^{d}$) we get%
\begin{eqnarray*}
A_{i}(t,x) &<&(c\varepsilon )^{\beta _{i}}\int_{0}^{t}e^{s\sigma
_{i}+s-(\theta _{j}+\eta _{j})\beta _{i}s} \\
&&\times p\left( h(t)^{-\alpha }\left( g(t)-g(s)\right) +1,\frac{h(t)}{h(s)}%
x\right) dsp(1,x)^{\beta _{i}-1} \\
&=&(c\varepsilon )^{\beta _{i}}\left( p(1,0)\right) ^{\beta
_{i}-1}\int_{0}^{t}e^{s\sigma _{i}+s-(\theta _{j}+\eta _{j})\beta
_{i}s}dsp(1,x).
\end{eqnarray*}%
Since $\rho \leq 1$, then%
\begin{equation*}
\left( e^{s}-1\right) ^{\rho }+1\geq e^{\rho s},\ \ s\geq 0.
\end{equation*}%
To see this consider the function $\tilde{f}:\left[ 1,\infty \right)
\rightarrow \mathbb{R}$,%
\begin{equation*}
\tilde{f}(s)=(s-1)^{\rho }+1-s^{\rho },
\end{equation*}%
and observe that $\tilde{f}^{\prime }(s)\geq 0$. Taking into account this, 
\begin{align}
& c\varepsilon h(t)^{-d}p(1,x)+cA_{i}(t,x)  \notag \\
& \leq c\varepsilon e^{-d\rho t/\alpha }p(1,x)+(c\varepsilon )^{\beta
_{i}}p(1,0)^{\beta _{i}-1}p(1,x)\frac{e^{(\sigma _{i}+1-(\theta _{j}+\eta
_{j})\beta _{i})t}-1}{\sigma _{i}+1-(\theta _{j}+\eta _{j})\beta _{i}} \notag\\
&:=J_{3}+J_{4}.\label{estdt}
\end{align}%
Now let us estimate $J_{3}$ and $J_{4}$. For $J_{3}$ we have%
\begin{equation}
J_{3}=e^{(\theta _{i}+\eta _{i}-d\rho
/\alpha )t}w_{i}(t,x)<\frac{1}{2}w_{i}(t,x),  \label{est1a2}
\end{equation}%
because%
\begin{equation*}
\theta _{i}<\frac{d\rho }{\alpha },
\end{equation*}%
and taking $\eta _{i}$ small enough. And for $J_{4}$,
\begin{eqnarray*}
J_{4}&=&(c\varepsilon )^{\beta _{i}-1}p(1,0)^{\beta _{i}-1}\frac{e^{(\theta
_{i}+\eta _{i}+\sigma _{i}+1-(\theta _{j}+\eta _{j})\beta _{i})t}-e^{(\theta
_{i}+\eta _{i})t}}{\sigma _{i}+1-(\theta _{j}+\eta _{j})\beta _{i}}w_{i}(t,x) \\
& \leq& (c\varepsilon )^{\beta _{i}-1}p(1,0)^{\beta _{i}-1}\frac{e^{(\theta
_{i}+\eta _{i}+\sigma _{i}+1-(\theta _{j}+\eta _{j})\beta _{i})t}}{\sigma
_{i}+1-(\theta _{j}+\eta _{j})\beta _{i}}w_{i}(t,x).
\end{eqnarray*}
If we take%
\begin{equation*}
\eta _{j}>\frac{\eta _{i}}{\beta _{i}},
\end{equation*}%
then%
\begin{equation*}
\theta _{i}+\eta _{i}+\sigma _{i}+1-(\theta _{j}+\eta _{j})\beta _{i}<0,
\end{equation*}%
here we used that $\theta _{i}+\sigma _{i}+1=\theta _{j}\beta _{i}$. This
yields%
\begin{equation*}
J_{4}\leq (c\varepsilon )^{\beta _{i}-1}p(1,0)^{\beta
_{i}-1}w_{i}(t,x).  
\end{equation*}%
Therefore, if $\varepsilon >0$ is small enough then%
\begin{equation}
J_{4}\leq \frac{1}{2}w_{i}(t,x).\label{est2a2}
\end{equation}%
From (\ref{estdt}), (\ref{est1a2}) and (\ref{est2a2}) we get%
\begin{equation*}
c\varepsilon h(t)^{-d}p(1,x)+cA_{i}(t,x)<w_{i}(t,x).
\end{equation*}%
This means that%
\begin{eqnarray*}
w_{i}(t,x) &>&c\varepsilon h(t)^{-d}p(1,x)+c\int_{0}^{t}g^{\sigma _{i}/\rho
}(s)e^{s}h^{d}(s) \\
&&\times \int_{\mathbb{R}^{d}}p\left( g(t)-g(s),h(s)y-h(t)x\right)
w_{j}^{\beta _{i}}(s,y)dyds.
\end{eqnarray*}%
By the comparison Lemma \ref{lemacompa} we have%
\begin{equation*}
u_{i}\left( g^{1/\rho }(t),h(t)x\right) \leq c\varepsilon e^{-(\theta
_{i}+\eta _{i})t}p(1,x),\ \ \forall (t,x)\in \lbrack 0,\infty )\times 
\mathbb{R}^{d}.
\end{equation*}%
The results follows form (1) in Lemma \ref{pdd}.\hfill 
\end{proof}

\subsection*{Acknowledgment}

This work was partially supported by the grant No. 118294 of CONACyT.
Moreover, Villa-Morales was also supported by the grant PIM13-3N of UAA.


\begin{thebibliography}{99}
\bibitem{Bai} X. Bai, \textit{Finite time blow-up for a reaction-diffusion
system in bounded domain}. Z. Angew. Math. Phys. DOI
10.1007/s00033-013-0330-4 (2013).

\bibitem{B-G} R.M. Blumental, R.K. Getoor, \textit{Some theorems on stable
processes}. Trans. Amer. Math. Soc. \textbf{95} (1960), 263-276.

\bibitem{EH} M. Escobedo, M.A. Herrero, \textit{Boundedness and blow up for
a semilinear reaction-diffusion system}. J. Differ. Equ. \textbf{89} (1991),
176-202.

\bibitem{Fujita} H. Fujita, \textit{On the blowing up of solutions of the
Cauchy problem for $u_{t}=\Delta u+u^{1+\alpha }$}. J. Fac. Sci. Univ. Tokyo
Sect. I \textbf{13} (1966), 109-124.

\bibitem{G-K-1} M. Guedda, M. Kirane, \textit{A note on nonexistence of
global solutions to a nonlinear integral equation}. Bull. Belg. Math. Soc.
Simon Stevin \textbf{6} (1999), 491-497.

\bibitem{M-V} J.A. L\'{o}pez-Mimbela, J. Villa, \textit{Local time and
Tanaka formula for a multitype Dawson-Watanabe superprocess}. Math. Nachr. 
\textbf{279} (2006), 1695-1708.

\bibitem{M-W} J.A. Mann Jr., W.A. Woyczy\'{n}ski, \textit{Growing Fractal
Interfaces in the Presence of Self-similar Hopping Surface Diffusion}. Phys.
A \textbf{291} (2001), 159-183.

\bibitem{moch} K. Mochizuki, Q. Huang, \textit{Existence and behavior of
solutions for a weakly coupled system of reaction-diffusion equations}.
Methods Appl. Anal. \textbf{5} (1998), 109-124.

\bibitem{P-V} A. Perez, J. Villa, \textit{Blow-up for a system with
time-dependent generators}. ALEA \textbf{7} (2010), 207-215.

\bibitem{Q-H} Y.W. Qi, H.A. Levine, \textit{The critical exponent of
degenerate parabolic systems}. Z. Angew. Math. Phys. \textbf{44} (1993),
549-265.

\bibitem{S-Z} M.F. Shlesinger, G.M. Zaslavsky, U. Frisch (Eds), \textit{L%
\'{e}vy Fligths and Related Topics in Physics}. Lecture Notes in Physics 
\textbf{450}. Springer-Verlag, Berlin, 1995.

\bibitem{Sug} S. Sugitani, \textit{On nonexistence of global solutions for
some nonlinear integral equations}. Osaka J. Math. \textbf{12} (1975), 45-51.

\bibitem{Uda} Y. Uda, \textit{The critical exponent for a weakly coupled
system of the generalized Fujita type reaction-diffusion equations}. Z.
Angew. Math. Phys. \textbf{46} (1995), 366-383.

\bibitem{villa} J. Villa-Morales, \textit{Blow up of mild solutions of a
system of partial differential equations with distinct fractional diffusions}%
. ArXiv:1208.4001v3 (2013).

\bibitem{Y} J. Yeh, \textit{Real Analysis, theory of measure and integration}%
. 2nd Edition, World Scientific, 2006.
\end{thebibliography}
\end{document}